\newtheorem{theorem}{Theorem}[section]
\newtheorem{lemma}[theorem]{Lemma}
\newtheorem{prop}[theorem]{Proposition}
\newtheorem{cor}[theorem]{Corollary}
\newtheorem{fact}{Fact}
\theoremstyle{definition}
\newtheorem{definition}[theorem]{Definition}
\newtheorem{example}[theorem]{Example}
\newtheorem{remark}[theorem]{Remark}
\newtheorem{ex}[theorem]{Example}
 \numberwithin{equation}{theorem}
\newcommand{\coker}{\mathop{\mathrm{Coker}}\nolimits}
\newcommand{\N}{{\mathbb N}}
\newcommand{\Z}{{\mathbb Z}}
\newcommand{\rank}{\mathop{\mathrm{rank}}\nolimits}
\newcommand{\im}{\mathop{\mathrm{Im}}\nolimits}
\newcommand{\projdim}{\mathop{\mathrm{pd}}\nolimits}
\newcommand{\proj}{\mathop{\mathrm{Proj}}\nolimits}
\newcommand{\tor}{\mathop{\mathrm{Tor}}\nolimits}
\renewcommand{\Z}{\mathbb Z}
\newcommand{\longto}{\longrightarrow}
\renewcommand{\l}{\ell}
\newcommand{\8}{{\infty}}	
\newcommand{\T}{{\lceil}}
\newcommand{\4}{{\lfloor}}
\newcommand{\7}{{\rceil}}
\newcommand{\GG}{{\Gamma}}
\newcommand{\3}{{\rfloor}}
\renewcommand{\hom}{\mathop{\mathrm{Hom}}\nolimits}
\renewcommand{\O}{\mathcal{O}}
\newcommand{\E}{\epsilon(q)}
\renewcommand{\S}{\mathcal{S}}
\newcommand{\V}{\mathcal{V}}
\newcommand{\W}{\mathcal{W}}
\newcommand{\syz}{\mathop{\mathrm{Syz}}\nolimits}
\renewcommand{\i}[1]{\mathfrak{#1}}
\renewcommand{\ker}{\mathop{\mathrm{Ker}}\nolimits}
\renewcommand{\tilde}{\protect\widetilde}
\renewcommand{\bar}{\protect\overline}
\newcommand{\lra}{\longrightarrow}
\renewcommand{\l}{\ell}
\newcommand{\tsd}{\operatorname{t.s.d}}
\newcommand{\soc}{\operatorname{soc}}
\newcommand{\charac}{\mathrm{char}}
\newcommand{\length}{\lambda}
\begin{document}
\title[Asymptotic behavior of the socle of Frobenius powers]{Asymptotic behavior of the socle of Frobenius powers}
\author{JINJIA LI}
\address{Department of Mathematics\\ 328 Natural Sciences Building\\ University of Louisville\\ Louisville, KY 40292\\ USA}
\email{jinjia.li@louisville.edu}

\subjclass{13A35, 13D02, 13D40, 14H60.}
\date{Received May 11, 2011; received in final form \today}

\keywords{Frobenius power, socle, diagonal $F$-threshold, strong semistability, syzygy bundle, Hilbert-Kunz function}

\begin{abstract}
Let $(R, \i m)$ be a local ring of prime characteristic $p$ and $q$ a varying power of $p$. We study the asymptotic behavior of the socle of $R/I^{[q]}$ where $I$ is an $\i m$ -primary ideal of $R$. In the graded case, we define the notion of diagonal $F$-threshold of $R$ as the limit of the top socle degree of  $R/\i m^{[q]}$ over $q$ when $q \to \infty$. Diagonal $F$-threshold exists as a positive number (rational number in the latter case) when: (1) $R$ is either a complete intersection or $R$ is $F$-pure on the punctured spectrum; (2) $R$ is a two dimensional normal domain. In the latter case, we also discuss its geometric interpretation and apply it to determine the strong semistability of the syzygy bundle of $(x^d, y^d,z^d)$ over the smooth projective curve in $\mathbb P^2$ defined by $x^n+y^n+z^n=0$. The rest of this paper concerns a different question about how the length of the socle of $R/I^{[q]}$ vary as $q$ varies. We give explicit calculations of the length of the socle of $R/\i m^{[q]}$ for a class of hypersurface rings which attain the minimal Hilbert-Kunz function. We finally show, under mild conditions, the growth of such length function and the growth of the second Betti numbers of $R/\i m^{[q]}$  differ by at most a constant, as $q \to \infty$.
\end{abstract}

\maketitle

\section{Introduction}
We first review some notation and 
definitions used throughout the paper.
In general, for a commutative ring $R$ of prime characteristic $p>0$, the Frobenius endomorphism $f\colon R \to R$ is defined by $f(r)=r^p$ for $r \in
R$; its self-compositions are given by $f^n(r) = r^{p^n}$. Restriction of scalars along each iteration $f^n$ endows $R$ with a new $R$-module structure, denoted by ${}^{f^n}\!\! R$.
For simplicity, we use $q$ to denote $p^n$. If $I$ is an ideal of $R$, the $q$-th Frobenius power of $I$ is the
ideal generated by the $q$-th powers of the generators of $I$, denoted by $I^{[q]}$. We use $F^n(-)$ to denote the functor from the category of $R$-modules to itself, given by base change along the Frobenius endomorphism $R \to {}^{f^n}\!\! R$. It is easy to see that $F^n(R/I) \cong R/I^{[q]}$. Also, the derived functors of $F^n(-)$ are  $\tor^R_i (-,{}^{f^n}\!\! R)$. For an $R$-module $M$, we use
 $\length(M)$ (resp. $\projdim M$) to denote the
length (resp. projective dimension) of $M$. When $R$ is local with maximal ideal $\i m$, the socle of $M$ is $(0: \i m)_M$, which is isomorphic to $\hom_R(R/\i m, M)$. For an $\i m$-primary ideal $I$, the Hilbert-Kunz function of $R$ with respect to $I$  is the length function $\length (R/I^{[q]})$ (as a function of $q$); the Hilbert-Kunz multiplicity of $R$ with respect to $I$ is the limit of $\length (R/I^{[q]})/q^{\dim R}$ as $q \to \8$. Such a limit always exists \cite{M1}.

In this paper, we investigate questions related to the following general question: how does the socle of $R/I^{[q]}$ vary as $q$ varies? While these questions are fairly well-understood when $I$ has finite projective dimension (see \cite{KV}), they remain quite mysterious when the projective dimension of $I$ is infinite and this is where our original motivation came from. In addition to that, our studies on these questions are also motivated by their relations with Hilbert-Kunz function and tight closure theory from many aspects. We refer to \cite{BC, B1, Y} for work along those lines.  

The organization of this paper is as follows. In Section~\ref{AB-tsd}, for a standard graded local algebra $(R, \i m)$ over a field of characteristic $p$, we define the notion of diagonal $F$-threshold $c^I(R)$ of $R$ (with respect to a homogeneous $\i m$-primary ideal $I$). It is the limit of the top socle degree of $R/I^{[q]}$ over $q$ as $q \to \8$. Such a definition agrees with the definition of $F$-threshold in the literature, which is defined under a more general set-up. The existence of the diagonal $F$-threshold of complete intersection rings follows from recent work of Kustin and Vraciu (see Proposition~\ref{TSD}). In general, it is not easy to calculate this invariant unless $I$ has finite projective dimension. However, in graded dimension two case (smooth projective curve case), we can use the geometric tools developed in \cite{B2} to study it and the rest of Section~\ref{AB-tsd} is devoted to this task. In graded dimension 2, the diagonal $F$-thresholds are all rational numbers. Specifically, we prove in Theorem~\ref{SS} that if the syzygy bundle of $I$ is strongly semistable and the degrees $d_1, \cdots, d_s$ of the generators of $I$ satisfy certain condition, then the diagonal $F$-threshold $c^I(R)$ is just $\frac{d_1+\cdots + d_s}{s-1}$, a rational number independent of the characteristic $p$. The case that the syzygy bundle of $I$ is not strongly semistable will be discussed in Theorem~\ref{NSS}. We show that in this case, under certain conditions, $c^I(R)$ is equal to the rational number $\nu_t$ appeared originally in \cite{B2},  whose definition (in general) relies on a result of Langer about the existence of the \textit{Strong Harder-Narashimhan filtrations}. As a result, we can use a numerical condition on the diagonal $F$-threshold to characterize the strong semistability of the syzygy bundle of $I$ when $I$ is generated by homogeneous elements of the same degree and $R$ is of the form $k[x,y,z]/(f)$ (Corollary~\ref{equiv}).

In Section~\ref{application}, we apply Corollary~\ref{equiv} to study the strong semistability of the syzygy bundle of $I=(x^d, y^d,z^d)$ over the smooth projective curve $\proj k[x,y,z]/(x^n+y^n+z^n)$ in prime characteristic $p$. Our work here relies heavily on a very recent preprint \cite{KRV} of Kustin, Rahmati and Vraciu, in which they completely determine how the property $\projdim I <\8$ depends on the parameters $p,n$ and $d$. We are able to transfer their results to determine, in quite many cases, how the strong semistability of the syzygy bundle of $I$ depends on the parameters $p,n$ and $d$. In particular, with some restrictions on $p,n$ or $d$, the strong semistability of the syzygy bundle of $I$ can be characterized by the condition $\projdim I^{[q]}=\8$ for all $q\gg0$.

In Section~\ref{sl} and Section~\ref{cl}, we study the the asymptotic behavior of length of socle of $R/I^{[q]}$. Such a length function had been preliminarily investigated by the author in his Ph.D thesis, for the purpose of answering a question of Dutta related to the nonnegativity conjecture of intersection multiplicity in the non-regular case, we refer to \cite{D} and \cite{L} for more details in that direction. Nevertheless, it is in general quite challenging to explicitly calculate this length function; even for the less complicated Hilbert-Kunz functions, the explicit calculations could be quite none trivial (for examples, see \cite{HM}).  The main result here is an explicit calculation  of this length function for a special class of hypersurface rings which attain the minimal Hilbert-Kunz function (see Definition~\ref{mHK}). What makes this calculation possible in such a case is the observation that the entire socle of $R/\i m^{[q]}$ lives in the top degree spot. We do not know how to calculate this length function, or merely determine a leading term,  when the socle contains elements of different degrees. We also point out a two-dimensional example in which the limit of $\length (\soc(R/\i m^{[q]})/ q^{\max\{0, \dim R-2\}})$ fail to exist as $q \to \8$.

In Section~\ref{betti}, we use Gorenstein duality and some spectral sequence arguments to prove Theorem~\ref{same}. In particular, it shows under mild conditions, the lengths of socle of $R/I^{[q]}$ and the second Betti numbers of $R/I^{[q]}$ differ only by a constant for $q$ sufficiently large.
\section{Asymptotic behavior of top socle degree of Frobenius powers}\label{AB-tsd}

Throughout this section, we assume ($R, \i m$) is a standard graded Noetherian local algebra over a field of positive characteristic $p$. Let $I$ be a homogeneous $\i m$-primary ideal  of $R$. Recall the $a$-invariant $a(R)$ of $R$ is the largest integer $m$ such that $(H_{\i m}^{\dim R}(R))_m\neq 0$, where $H_{\i m}^{\dim R}(R)$ is the top local cohomology module of $R$. When $R$ is complete intersection of the form $S/C$ where $S$ is the polynomial ring $k[x_1,\cdots, x_n]$ and $C$ is the ideal generated by homogeneous regular sequence $f_1, \cdots, f_t$, $a(R)=\sum_i \deg f_i -n$. For every standard graded Artinian $R$-module $M=\oplus M_n$, we use $\tsd( M)$ to denote the top socle degree of $M$, which is equal to $\max \{n| M_n \neq 0\}$.  Recently, Kustin and Vraciu (see \cite{KV}, Proposition 7.1) established the following lower bound for the top socle degree of $R/I^{[q]}$, in the case either $R$ is complete intersection or $R$ is Gorenstein and $F$-pure:

\begin{theorem}[Kustin-Vraciu] \label{KV}
Assume either $R$ is complete intersection or $R$ is Gorenstein and F-pure. If $\tsd (R/I)=s$, then for every $q$
\[\tsd (R/I^{[q]}) \geq  (s-a(R))q+a(R)\]
\end{theorem}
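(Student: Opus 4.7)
The hypotheses force $R$ to be Gorenstein and Cohen-Macaulay, so my plan is to exploit Gorenstein duality after reducing to an Artinian quotient. The $\dim R = 0$ case is trivial, since then $s \le \tsd(R) = a(R)$ and hence $(s-a(R))q + a(R) \le a(R) \le \tsd(R/I^{[q]})$ automatically. So assume $\dim R \ge 1$ and choose a homogeneous system of parameters $\underline{x} = x_1,\dots,x_d \in I$ of degrees $e_i$ (possible since $I$ is $\i m$-primary and $R$ is CM). Set $A := R/(\underline{x})$ and $A_q := R/(x_1^q,\dots,x_d^q)$; both are Artinian Gorenstein with $a(A) = a(R) + \sum e_i$ and $a(A_q) = a(R) + q\sum e_i$. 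A characteristic-$p$ computation gives $(\underline{x})^{[q]} \subseteq I^{[q]}$, so $\tsd(R/I^{[q]}) = \tsd(A_q/I^{[q]}A_q)$.

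Next I apply Gorenstein duality on $A$, which says $\tsd(A/J) = a(A) - \min\deg(\ann_A(J))$. Thus the hypothesis $\tsd(A/IA) = s$ produces a homogeneous $z \in R$ with $\deg z = a(R) + \sum e_i - s$, $z \notin (\underline{x})$, and $zI \subseteq (\underline{x})$. Raising to the $q$-th power in characteristic $p$ yields $z^q I^{[q]} = (zI)^{[q]} \subseteq (\underline{x})^{[q]}$, so $z^q$ represents an element of $\ann_{A_q}(I^{[q]}A_q)$. Provided the class of $z^q$ in $A_q$ is nonzero, Gorenstein duality on $A_q$ gives
\[
\tsd(A_q/I^{[q]}A_q) \;\ge\; a(A_q) - q\deg z \;=\; (s - a(R))q + a(R),
\]
the desired bound. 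Thus the proof reduces to showing $z^q \notin (\underline{x})^{[q]}$ for a suitable choice of $z$.

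In the $F$-pure Gorenstein case this is immediate: $F$-purity implies that the ring map $R/(\underline{x}) \to R/(\underline{x})^{[q]}$, $r \mapsto r^q$, is injective (it is obtained by tensoring the pure inclusion $R \hookrightarrow F_*^e R$ with $R/(\underline{x})$). Hence any $z \notin (\underline{x})$ yields $z^q \notin (\underline{x})^{[q]}$.

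The complete-intersection case is the main obstacle I anticipate, since in a non-$F$-pure CI the Frobenius closure of a parameter ideal can be strict. My plan here is to exploit the explicit socle description of $A_q$: by the colon-capturing identity $(\underline{x})^{[q]} :_R (x_1\cdots x_d)^{q-1} = (\underline{x})$ (valid for any regular sequence in a CM ring), the socle of $A_q$ is generated by $(x_1\cdots x_d)^{q-1}\Delta$, where $\Delta$ is a socle representative of $A$. Combined with Gorenstein duality on $A_q$, one then argues that either a minimal-degree element $z \in \ann_A(IA)$ can be chosen so that $z^q$ pairs nontrivially with $(x_1\cdots x_d)^{q-1}\Delta$ in $A_q$ (forcing $z^q \notin (\underline{x})^{[q]}$), or else every generator $g$ of $I$ must satisfy $g^q \in (\underline{x})^{[q]}$, i.e., $I^{[q]} \subseteq (\underline{x})^{[q]}$; in that latter case $\tsd(R/I^{[q]}) = a(A_q)$, and the bound follows automatically from $s \le a(A) = a(R) + \sum e_i$, which gives $(s-a(R))q + a(R) \le a(R) + q\sum e_i = a(A_q)$.
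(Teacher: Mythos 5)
Your reduction to the Artinian quotients $A$ and $A_q$, the duality formula $\tsd(A_q/J) = a(A_q)-\min\deg\ann_{A_q}(J)$, and the treatment of the Gorenstein $F$-pure case via purity are all sound. But there are two genuine gaps. The lesser one is in your $\dim R = 0$ argument: you claim $a(R)\le\tsd(R/I^{[q]})$, yet $R/I^{[q]}$ is a quotient of $R$, so in fact $\tsd(R/I^{[q]})\le a(R)$, with strict inequality whenever $I^{[q]}\ne 0$ (e.g.\ $R=k[x,y]/(x^4,y^4)$ in characteristic $3$, $I=\i m$, $q=3$ gives $\tsd(R/\i m^{[3]})=4<6=a(R)$).

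The serious gap is the dichotomy you invoke in the complete-intersection case, which is asserted rather than proved and is in fact false. Take $R=k[x,y,z]/(y^2z-x^3)$ in characteristic $2$, $I=\i m$, $\underline{x}=\{y,z\}$, $q=2$. Then $A=k[x]/(x^3)$, $s=\tsd(R/\i m)=0$, and the only minimal-degree element of $\ann_A(\i m A)$ is $z=x^2$. One computes $z^2=x^4=x\cdot y^2z\in(y^2)\subseteq(y^2,z^2)=(\underline{x})^{[2]}$, so the first alternative fails; while a degree count shows $x^2\notin(y^2,z^2)$ in $R$, so $I^{[2]}\not\subseteq(\underline{x})^{[2]}$ and the second alternative fails as well. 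The theorem still holds here (the right-hand side of the inequality is $0$), but your argument produces no nonzero witness in $\ann_{A_q}(I^{[q]}A_q)$ of the needed degree, even though one exists: the element $x$ has degree $1\le q(a(A)-s)=4$ and lies in $\ann_{A_2}(x^2A_2)\setminus(\underline{x})^{[2]}$, but it is not a $q$-th power. This exposes the real obstruction: in a non-$F$-pure complete intersection, the map $z\mapsto z^q$ is too rigid to supply the low-degree annihilator of $I^{[q]}A_q$, and one must build it by a different mechanism --- in Kustin--Vraciu this is done by passing to the ambient polynomial ring, exploiting flatness of Frobenius there, and controlling the linkage/correction term $M_q$ recorded in equation~(\ref{KVrelation}).
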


We remark here that since $R$ is standard graded, the powers $\i m^r$ are exactly $\oplus_{n\geq r} R_n$. Therefore the top socle degree of $R/I^{[q]}$ is nothing but the invariant
\[\nu_{\i m}^I(q) := \max\{r\in \N| {\i m}^r \not\subseteq I^{[q]} \}.\]
The limit (when exists) of  $\{\nu_{\i m}^I(q)/q\}$ as $q \to \8$ is a special case of an invariant called \textit{$F$-threshold}. More generally, for any ideals $\i a$ and $J$ of $R$ (not necessarily graded) with $\i a \subseteq \sqrt{J}$, one can define $\nu_{\i a}^J(q) = \max\{r\in \N| {\i a}^r \not\subseteq J^{[q]} \}$ and study the convergence of the sequence $\nu_{\i a}^J(q)/q$ as $q \to \8$. We refer to \cite{BMS, HMTW, HTW, MTW} for details on that direction. $F$-thresholds are known to exist for rings which is $F$-pure on the punctured spectrum. Here we focus on a special case of $F$-threshold which we will call \textit{diagonal $F$-threshold}. 

\begin{definition}
The \emph{diagonal $F$-threshold} of $R$ with respect to an $\i m$-primary ideal $I$, denoted $c^I(R)$, is defined as
\[ c^I(R) =\lim_{q \to \8} \dfrac{\tsd (R/I^{[q]})}{q}\]
whenever such a limit exists. We also use $c(R)$ to denote $c^{\i m}(R)$, and simply call it the diagonal $F$-threshold of $R$.
\end{definition}

\begin{prop} 
\label{TSD}
The diagonal $F$-threshold $c^I(R)$ exists when $R$ is complete intersection or is $F$-pure on the punctured spectrum. Moreover, 
if $R$ is complete intersection of the form $S/C$ where $S$ is the polynomial ring $k[x_1,\cdots, x_n]$ and $C$ is the ideal generated by a homogeneous regular sequence $f_1, \cdots, f_t$, assuming $J$ is the lift of $I$ in $S$, then  
\begin{equation}\label{lower}
c^I(R) \geq c^J(S)-\sum \deg f_i.
\end{equation}
In particular, $c(R) \geq -a(R)$, where $a(R)$ is the $a$-invariant.
\end{prop}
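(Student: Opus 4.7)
The plan is to handle existence and the numerical inequality as two conceptually separate tasks. For existence, when $R$ is $F$-pure on the punctured spectrum, this is an instance of the general existence theorem for $F$-thresholds and I would simply cite the results of \cite{HMTW, MTW}. In the complete intersection case $R = S/C$, the polynomial ring $S$ is $F$-regular, so $c^J(S)$ exists by the above; I would then deduce existence of $c^I(R)$ by combining the surjection $S/J^{[q]} \twoheadrightarrow R/I^{[q]}$ (which gives the upper bound $\tsd(R/I^{[q]}) \le \tsd(S/J^{[q]})$), the Kustin-Vraciu lower bound (Theorem~\ref{KV}), and a Fekete-type subadditivity argument exploiting the CI structure. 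The substantive content of the proposition is then the numerical inequality
\[
\tsd(R/I^{[q]}) \;\ge\; \tsd(S/J^{[q]}) \;-\; (q-1)\sum_{i=1}^t \deg f_i.
\]

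To prove this inequality I would exploit the $C$-adic filtration on $S$. Since $f_1,\dots,f_t$ is a regular sequence, $\mathrm{gr}_C(S) \cong R[T_1,\dots,T_t]$, and in particular each $C^r/C^{r+1}$ is $R$-free on the monomials $f^\alpha$ with $|\alpha|=r$. Pick a homogeneous $\eta \in S \setminus J^{[q]}$ of degree $\nu' := \tsd(S/J^{[q]})$. After modifying $\eta$ by an element of $J^{[q]}$ and taking graded components, I may assume $\eta \in C^r \setminus C^{r+1}$ for some $r \ge 0$, and write the leading form $\eta \equiv \sum_{|\alpha|=r} h_\alpha f^\alpha \pmod{C^{r+1}}$ with classes $h_\alpha \in R$ not all zero. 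The key observation is that $f^\alpha \in J^{[q]}$ whenever some $\alpha_i \ge q$, since $f_i \in J$ forces $f_i^q \in J^{[q]}$; so modulo $J^{[q]}$ only the ``valid'' indices with every $\alpha_i \le q-1$ contribute. If some valid $h_\alpha$ is nonzero in $R/I^{[q]}$, its degree $\nu' - \sum \alpha_i \deg f_i$ is at least $\nu' - (q-1) \sum \deg f_i$, yielding the inequality. Otherwise every valid $h_\alpha$ lies in $J^{[q]} + C$; splitting and reassembling absorbs the $J^{[q]}$-parts and pushes the $C$-parts of $h_\alpha f^\alpha$ into $C^{r+1}$, producing a new homogeneous $\eta'' \in C^{r+1} \setminus J^{[q]}$ of the same degree $\nu'$. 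Iterating strictly increases $r$; the process cannot reach $r = t(q-1)+1$, because by pigeonhole every $\alpha$ with $|\alpha| \ge t(q-1)+1$ has some $\alpha_i \ge q$, forcing $C^{t(q-1)+1} \subseteq J^{[q]}$, which would contradict $\eta'' \notin J^{[q]}$. Hence termination is inevitable and produces the required witness.

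Dividing the numerical inequality by $q$ and letting $q \to \infty$, using the established existence of $c^J(S)$, yields $c^I(R) \ge c^J(S) - \sum \deg f_i$. For the ``in particular'' clause, specialize to $I = \mathfrak m_R$, $J = \mathfrak m_S$: since $S/\mathfrak m_S^{[q]} \cong k[x_1,\dots,x_n]/(x_1^q,\dots,x_n^q)$ has top socle degree $n(q-1)$, we have $c(S) = n$, whence $c(R) \ge n - \sum \deg f_i = -a(R)$. The hard part will be the iterative decomposition in the second paragraph: one must carefully maintain homogeneity of $\eta^{(i)}$, ensure strict increase of the level $r$, and exploit the pigeonhole containment $C^{t(q-1)+1} \subseteq J^{[q]}$ to force termination with a genuinely nonzero class in $R/I^{[q]}$ of the stated degree.
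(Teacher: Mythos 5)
Your argument for the key inequality is correct and takes a genuinely different route from the paper. The paper reduces to the case where $I$ is irreducible (so that $R/I^{[q]}$ is Gorenstein Artinian) and then cites the Kustin--Vraciu identity
\[
\tsd\bigl(S/(J^{[q]}+C)\bigr)=\tsd\bigl(S/J^{[q]}\bigr)-M_q,\qquad M_q\le (q-1)\sum_i\deg f_i,
\]
where $M_q$ is the least degree of a nonzero homogeneous element of $(J^{[q]}:C)/J^{[q]}$; that identity is a duality statement and needs the Gorenstein hypothesis. Your $C$-adic filtration argument proves the one-sided bound
\[
\tsd(R/I^{[q]})\;\ge\;\tsd(S/J^{[q]})-(q-1)\sum_i\deg f_i
\]
directly for \emph{any} $\i m$-primary homogeneous $I$, with no reduction to irreducible $I$ needed. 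The mechanics check out: $\mathrm{gr}_C(S)\cong R[T_1,\dots,T_t]$ because $f_1,\dots,f_t$ is a regular sequence; $f_i\in C\subseteq J$ ensures $f_i^q\in J^{[q]}$ so the non-valid monomials $f^\alpha$ with some $\alpha_i\ge q$ lie in $J^{[q]}$; the absorption step $\eta''=\eta-(\text{element of }J^{[q]})\in C^{r+1}$ preserves both homogeneity of degree $\nu'$ and the condition $\eta''\notin J^{[q]}$; and the pigeonhole bound $C^{t(q-1)+1}\subseteq J^{[q]}$ forces termination at some level $r\le t(q-1)$ with a valid $\alpha$ (all $\alpha_i\le q-1$) and $h_\alpha\ne 0$ in $R/I^{[q]}$, of degree $\nu'-\sum\alpha_i\deg f_i\ge \nu'-(q-1)\sum\deg f_i$. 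This is a nice, self-contained alternative to the paper's appeal to \cite{KV}, at the price of being a one-sided estimate rather than the exact identity.

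On existence, however, your sketch is under-specified and mildly mislabeled. The paper's argument is concrete: apply Theorem~\ref{KV} with $I^{[q']}$ in place of $I$ to obtain
\[
\tsd\bigl(R/I^{[q'q'']}\bigr)\ge\bigl(\tsd(R/I^{[q']})-a(R)\bigr)q''+a(R),
\]
so the sequence $q\mapsto\bigl(\tsd(R/I^{[q]})-a(R)\bigr)/q$ is non-decreasing; then a simple containment $(\i m^K)^{Lq}\subseteq(\i m^K)^{[q]}\subseteq I^{[q]}$ (with $\i m^K\subseteq I$ and $L$ the number of generators of $\i m^K$) bounds it above, whence it converges. You mention the right ingredients---an upper bound via the surjection $S/J^{[q]}\onto R/I^{[q]}$ and the Kustin--Vraciu lower bound---but ``Fekete-type subadditivity'' is not quite the mechanism here. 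Fekete's lemma is for subadditive sequences; what Theorem~\ref{KV} actually produces is \emph{monotonicity} of the normalized sequence, and the convergence then follows from monotone + bounded. You should replace the hand-wave with that explicit monotonicity step; as written, a reader cannot verify the existence claim from your proposal alone.
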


\begin{proof}
The argument for the existence of diagonal $F$-threshold (or more generally, the $F$-threshold) in the case of $F$-pure on the punctured spectrum can be found in \cite{HTW}. Assume $R$ is complete intersection.
By Theorem~\ref{KV}, the sequence 
\begin{equation}\label{seq}
\dfrac{\tsd (R/I^{[q]})-a(R)}{q}
\end{equation}
is increasing as $q$ increases. On the other hand, this sequence is bounded up by a very simple argument contained in \cite{B1}. We include that argument here for the sake of convenience. Choose $K$ large enough such that $\i m^K \subseteq I$ and let $L$ be the number of generators of $\i m^K$, then we have trivial inclusion $(\i m^K)^{Lq} \subseteq (\i m^K)^{[q]} \subseteq I^{[q]}$. This means $$\tsd (R/I^{[q]}) < (KL)q.$$ Therefore the sequence (\ref{seq}) converges as $q \to \8$, which implies 
the diagonal $F$-threshold $c^I(R)$ exists.

For the proof of the lower bound (\ref{lower}), we write $I$ as $I_1\cap I_2 \cap \cdots \cap I_b$ with each $I_i$ irreducible. Let $J_i$ be the lift of $I_i$ in $S$. It is easy to check that $c^I(R) \geq c^{I_i}(R)$ for each $i$. Since the Frobenius endomorphism on $S$ is flat,  has $c^J(S) = \max \{ c^{J_i}(S) |i\}$. Thus we reduce (\ref{lower}) to the case that $I$ is irreducible. In such a case, we have (see \cite{KV}, page 206)
\begin{equation}\label{KVrelation}
\tsd(\dfrac{S}{J^{[q]}+C})=\tsd(\dfrac{S}{J^{[q]}})-M_q
\end{equation}
where $M_q$ is the least degree among homogeneous nonzero elements of $(J^{[q]}:C)/J^{[q]}$,  which is less than or equal to $(q-1)\sum \deg f_i$. Then the lower bound (\ref{lower}) follows from dividing both sides of (\ref{KVrelation}) by $q$ and taking the limit. In particular, when $I$ is the maximal ideal $\i m$ of $R$, $J$ is the maximal ideal of $S$. In this case, since $c^J(S)=\dim S =n$, the right hand side of (\ref{lower}) is $n-\sum \deg f_i=-a(R)$.
\end{proof}

If $R$ is Gorenstein and $F$-pure, Theorem~\ref{KV} also provides a lower bound for diagonal $F$-thresholds, namely,
$$
c^I(R) \geq \tsd (R/I)-a(R).
$$
In particular, when $I= \i m$, one has $c(R) \geq -a(R)$.
Such a bound is achievable. For example, let $R=k[x,y,z,w]/(xy-zw)$, then $c(R)=2$ and $a(R)=-2$.

\begin{remark}
For a Cohen-Macaulay normal domain $R$, Brenner gave an upper bound of $\tsd (R/I^{[q]})$ which is better than the trivial upper bound used in the proof of Proposition~\ref{TSD}. We refer to \cite{B1} for details. When $I=\i m$, there is a lower bound $m(q)$ for $\tsd (R/\i m^{[q]})$ given by Buchweitz and Chen, see Theorem~\ref{BC} below. This is a uniform lower bound for all graded hypersurfaces of the form  $k[x_0,x_1, \cdots, x_n]/(f)$, where $f$ runs through all homogeneous polynomials of degree $d$. In particular, this implies $c(R)\geq \frac{n+1}{2}$.
\end{remark}


What information is encoded by the diagonal $F$-threshold $c^I(R)$ (or more generally, by $F$-thresholds)? Questions of this kind have been studied by many authors from many different angles (for example, see \cite{HMTW, HTW}). In the remaining part of this section, we investigate this question in the case of smooth projective curves. We first briefly recall some basic definitions.

{\bf{Harder-Narasimhan filtrations}}

Let $Y$ be a smooth projective curve over an algebraically closed
field. For any vector bundle $\V$ on $Y$ of rank $r$, the degree of $\V$ is defined as the degree of the line bundle $\wedge^r \V$. The 
slope of $\V$, denoted $\mu(\V)$, is defined as the fraction $\deg(\V)/r$. Slope is additive on tensor products of bundles: $\mu(\V \otimes \W)=
\mu(\V)+\mu(\W)$. If $f: Y'\lra Y$ is a finite map of degree $q$,
then $\deg (f^*(\V))=q\deg(\V)$ and so $\mu(f^*(\V))=q\mu(\V).$

A bundle $\V$ is called \emph{semistable} if for every subbundle
$\W\subseteq \V$ one has $\mu(\W) \leq \mu(\V)$. Clearly,
bundles of rank 1 are always semistable, and duals and twists of
semistable bundles are semistable.

Any bundle $\V$ has a filtration by subbundles
\[
0=\V_0 \subset \V_1 \subset \cdots \subset \V_t=\V
\]
such that $\V_k/\V_{k-1}$ is semistable and $\mu(\V_k/\V_{k-1}) >
\mu(\V_{k+1}/\V_k)$ for each $k$. This filtration is unique, and
it is called the \emph{Harder-Narasimhan (or HN) filtration} of $\V$.

In positive characteristic, we use $F$ to denote the absolute Frobenius morphism $F: Y \lra Y$. Pulling back a vector bundle under $F$ 
does not necessarily preserve semistability. Therefore, the
pullback under $F^n$ of an HN filtration of
$\V$ does not always give an HN filtration of
$(F^*)^n(\V)$. However, by the work of Langer \cite{La}, there always exists a so called strong NH filtration,
i.e., for some $n_0$, the HN filtration of $(F^*)^{n_0}(\V)$ has the property that all its
Frobenius pullbacks are the HN filtrations of $(F^*)^n(\V)$, for all
$n>n_0$. 

Suppose $R$ is a standard-graded two-dimensional normal domain and $I=(f_1,\cdots, f_s)$ where $f_i$ is homogeneous of degree $d_i$ for $1\leq i \leq s$.
Let $Y=\proj R$. Consider the syzygy
bundle $\S=\syz(f_1, \dots, f_s)$ on $Y$ given by the exact sequence
\begin{equation}
\label{def-syzygy}
0\lra \S \lra \bigoplus_{i=1}^s \O(-d_i) 
\overset{f_1, \ldots, f_s} {\longto} \O \lra 0
\end{equation} 
and the pullback of this exact sequence along 
$F^n$ (with a subsequent
twist by $m\in \mathbb Z$)
\begin{equation}
\label{exact-seq}
0\lra \S^q(m) \lra \bigoplus_{i=1}^s \O(m-qd_i) 
\overset{f_1^q, \dots, f_s^q} {\lra} \O(m) \lra 0
\end{equation} 
where $\S^q$ denotes the pullback $(F^*)^n(\S) = \syz (f_1^q, \dots, f_s^q)$.  Applying the sheaf cohomology to (\ref{exact-seq}), we have a long exact sequence
\begin{equation}\label{les}
0\to H^0(Y, \S^q(m)) \to \bigoplus_{i=1}^s H^0(Y, \O(m-qd_i)) \stackrel{f_1^q
, \ldots, f_s^q} {\longto} H^0(Y, \O(m)) \to
  \end{equation}
 \[ \to H^1(Y, \S^q(m)) \to \bigoplus_{i=1}^s H^1(Y,  \O(m-qd_i)) \to \cdots.\]
As $R$ is normal, the cokernel of the third map (from left)
in the long exact sequence (\ref{les})
is the $m$-th graded piece of $R/I^{[q]}$.


Now we are ready to move back to the question we asked earlier in this section. Set $\deg Y= \deg \O_Y(1)$ and let $\omega_Y$ denote the dualizing sheaf on $Y$.
We first treat the case where the syzygy bundle is strongly semistable. 

\begin{theorem} \label{SS}
Suppose the syzygy bundle $\S$ is strongly semistable. Assume also the degrees $d_i$ satisfy the condition $\frac{d_1+\cdots+d_s}{s-1} > \max_i\{d_i\}$, then 
\[c^I(R)= (d_1+\cdots + d_s)/(s-1).\]
\end{theorem}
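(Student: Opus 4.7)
The plan is to extract $\tsd(R/I^{[q]})$ directly from the long exact sequence (\ref{les}), combining the strong semistability of $\S$ with the standard vanishing theorems for semistable bundles on a smooth projective curve. From (\ref{def-syzygy}) one reads $\rank \S = s-1$ and $\deg \S = -(d_1+\cdots+d_s)\deg Y$, so setting $c := (d_1+\cdots+d_s)/(s-1)$ gives $\mu(\S) = -c\deg Y$ and, under strong semistability, every twist $\S^q(m)$ remains semistable of slope $\mu(\S^q(m)) = (m-qc)\deg Y$. The inputs I would invoke for a semistable bundle $\V$ on $Y$ (of genus $g$) are the familiar ones: $H^0(\V) = 0$ when $\mu(\V) < 0$, and (by Serre duality applied to $\V^{\vee}\otimes\omega_Y$) $H^1(\V) = 0$ when $\mu(\V) > 2g - 2$.

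For the upper bound I would choose $m > qc + (2g-2)/\deg Y$; then $H^1(\S^q(m)) = 0$, and (\ref{les}) forces $\bigoplus_i H^0(\O(m-qd_i)) \to H^0(\O(m))$ to be surjective, so $(R/I^{[q]})_m = 0$ and hence $\tsd(R/I^{[q]}) \leq qc + O(1)$. For the matching lower bound I would take $m$ just below $qc$: then $\mu(\S^q(m)) < 0$ kills $H^0(\S^q(m))$, so $(R/I^{[q]})_m$ is literally the cokernel of the multiplication map. Here the hypothesis $c > \max_i d_i$ is used essentially: it guarantees that for $m$ in a bounded window around $qc$ each exponent $m - qd_i$ is of order $q(c - d_i) \gg 0$, so Riemann--Roch gives $h^0(\O(m-qd_i)) = (m-qd_i)\deg Y + 1 - g$ and $h^1(\O(m-qd_i)) = 0$, and likewise for $\O(m)$. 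A short count then yields
$$\dim_k (R/I^{[q]})_m = (s-1)\bigl[(qc - m)\deg Y + g - 1\bigr],$$
which is strictly positive whenever $m$ is sufficiently close to, but strictly less than, $qc$; this forces $\tsd(R/I^{[q]}) \geq qc - O(1)$. Dividing by $q$ and letting $q \to \infty$ delivers $c^I(R) = c$.

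The main obstacle is not any single inequality but the bookkeeping needed to make the two $O(1)$ bands explicit and, in particular, to verify that the hypothesis $c > \max_i d_i$ is exactly what lets one apply Riemann--Roch uniformly in $q$ to every summand $\O(m-qd_i)$ throughout the window from which the lower bound is extracted. Once that numerical window is in place, the rest of the argument is a short assembly of the semistable vanishing theorems with the long exact sequence (\ref{les}).
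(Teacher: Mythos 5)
Your argument is correct and follows essentially the same path as the paper's: both extract the top socle degree from the long exact sequence (\ref{les}), using vanishing of $H^1(Y,\S^q(m))$ above slope $2g-2$ and of $H^0(Y,\S^q(m))$ below slope $0$, together with a Riemann--Roch count near $m=qc$ (where the hypothesis $c>\max_i d_i$ ensures each $\O(m-qd_i)$ has large positive degree, so its $H^1$ vanishes). The only difference is cosmetic: the paper cites the two vanishing bounds for $\S^q(m)$ from Brenner's \cite{B2}, whereas you re-derive them directly from the standard semistable-bundle vanishing criteria.
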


\begin{proof}
The rightmost term in (\ref{les}) is zero for $m > \max_i\{qd_i\}$. 
This is because, by Serre duality,  $h^1(\O(m-qd_i))=h^0(\O(-m+qd_i)\otimes \omega_Y)$, which equals zero since the degree of 
$\O(-m+qd_i)\otimes  \omega_Y$ is negative.

From \cite{B2}, we know for $m > \T \frac{d_1+ \cdots d_s}{s-1}q \7 +\frac{\deg \omega_Y}{\deg Y}$, $ H^1(Y, \S^q(m))$ vanishes. It follows that 
\[\tsd (R/I^{[q]}) \leq \T \frac{d_1+ \cdots d_s}{s-1}q \7 +\dfrac{\deg \omega_Y}{\deg Y}.\]
Also from \cite{B2}, for $m \leq \T \frac{d_1+ \cdots d_s}{s-1}q \7 -1$,  $ H^0(Y, \S^q(m))$ vanishes. An easy calculation asserts that, for $m=\T \frac{d_1+ \cdots d_s}{s-1}q \7 -1$ and $q\gg0$,
\[\sum_{i=1}^s h^1(\O(m-qd_i) \neq h^0(\O(m)).\]
Thus 
\[\tsd (R/I^{[q]}) \geq \T \frac{d_1+ \cdots d_s}{s-1}q \7-1, \text{ for } q \gg0,\]
and the theorem follows.
\end{proof}

We next discuss the case where the syzygy bundle is not strongly semistable. In such a case, using strong NH filtrations, Brenner defined rational numbers $\nu_1, \cdots, \nu_t$ for the syzygy bundle $\S$
\[\nu_i=-\dfrac{\mu(F^{*n}(\S_i))/\mu(F^{*n}(\S_{i-1}))}{q\deg \O(1)},\]
where $0=\S_0 \subset \S_1 \subset \cdots \subset \S_t=\S$ is a HN filtration of $F^{*{n_0}}(\S)$ which is strong and $q=p^{n_0+n}$.
These $\nu_i$'s satisfy $\min\{d_i\} \leq \nu_1 < \cdots <\nu_t \leq \max \{d_i+d_j|i \neq j\}$. Moreover,
 Brenner showed for $q\gg0$, if $m>q\nu_t+\frac{\deg \omega_Y}{\deg Y}$, then $ H^1(Y, \S^q(m))=0$ (see \cite{B2} for more details).
 
Let $g$ denotes the genus of $Y$. With the above set-up, we have 
\begin{theorem}\label{NSS}
\begin{itemize}
 \item[(1)] $\tsd (R/I^{[q]}) \leq\nu_tq+\frac{\deg \omega_Y}{\deg Y}, \text{ for } q\gg0.$
 \item[(2)] If we further assume 
$\nu_t> \max_i\{d_i\}$,
then for $q\gg0$, 
\[ \tsd (R/I^{[q]}) \geq \begin{cases}
\T\nu_t q\7-1, & \text{ if } g \geq 1 \\
\T\nu_t q\7-2, & \text{ if } g=0
\end{cases}\]
In particular, $c^I(R)$ exists and equals $\nu_t$.
\end{itemize}
\end{theorem}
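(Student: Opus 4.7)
The plan is to adapt the argument of Theorem~\ref{SS}, exploiting the long exact sequence (\ref{les}) together with the structure of the strong Harder-Narasimhan (HN) filtration of $\S^q$. The common starting point is the reformulation obtained from (\ref{les}): the graded piece $(R/I^{[q]})_m$ is the kernel of the connecting map
\[
H^1(Y,\S^q(m)) \longrightarrow \bigoplus_{i=1}^s H^1(Y,\O(m - qd_i)).
\]

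For part (1), I would invoke directly the vanishing result of \cite{B2} cited in the paragraph preceding the theorem: for $q\gg 0$ and $m > \nu_t q + \deg\omega_Y/\deg Y$, the left-hand term $H^1(Y,\S^q(m))$ vanishes, forcing $(R/I^{[q]})_m = 0$, which gives the upper bound.

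For part (2), the goal is to produce a nonzero class in $H^1(\S^q(m))$ just below the vanishing threshold, while simultaneously killing all the terms $H^1(\O(m-qd_i))$. Writing $q = p^{n_0+n}$ and pulling back the given strong HN filtration of $F^{*n_0}(\S)$ by $F^{*n}$ yields the HN filtration $0 = \V_0 \subset \V_1 \subset \cdots \subset \V_t = \S^q$, whose last graded quotient $\V_t/\V_{t-1}$ is semistable of slope $-\nu_t q \deg Y$ by the definition of $\nu_t$. For the choice $m = \T\nu_t q\7 - 1$ (resp. $\T\nu_t q\7 - 2$ when $g=0$), a short case analysis on the integrality of $\nu_t q$ confirms that $\mu((\V_t/\V_{t-1})(m)) = (m-\nu_t q)\deg Y$ is strictly less than $g-1$. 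Since this slope is also negative, semistability gives $H^0((\V_t/\V_{t-1})(m)) = 0$, and Riemann-Roch then turns the slope inequality into $h^1((\V_t/\V_{t-1})(m)) > 0$. Propagating up the filtration is immediate: the vanishing of $H^2$ on a curve turns each sequence $0 \to \V_{i-1}(m) \to \V_i(m) \to (\V_i/\V_{i-1})(m) \to 0$ into a surjection on $H^1$, yielding $h^1(\S^q(m)) \geq h^1((\V_t/\V_{t-1})(m)) > 0$. Simultaneously, the assumption $\nu_t > \max_i d_i$ makes $m - qd_i \to \infty$, so $m - qd_i > 2g-2$ for $q \gg 0$, and Serre duality gives $H^1(\O(m-qd_i)) = 0$ for every $i$. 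The connecting map thus has zero target, and $(R/I^{[q]})_m \cong H^1(\S^q(m)) \neq 0$, establishing the lower bound. Combining the two bounds and dividing by $q$ as $q \to \infty$ yields $c^I(R) = \nu_t$.

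I expect the main subtle point to be the strict inequality $\mu((\V_t/\V_{t-1})(m)) < g-1$; the required case analysis depends on whether $\nu_t q$ is integral and on the magnitude of $\deg Y$, and it is exactly this book-keeping that forces the sharper shift $-2$ in place of $-1$ when $g = 0$ (the extremal case $Y = \P^1$ with $\deg Y = 1$ being the tightest). Everything else reduces to standard cohomology of filtered sheaves on a curve combined with Serre duality.
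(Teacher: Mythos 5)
Your proposal is correct and takes a genuinely different route from the paper for part (2). The paper fixes $\ell = \lceil \nu_t q \rceil - 1$ and computes the length of the cokernel of $\bigoplus H^0(\O(\ell - qd_i)) \to H^0(\O(\ell))$ explicitly: it invokes Brenner's closed formula (\ref{e25}) for $h^0(Y,\S^q(m))$ in the range $q\nu_{t-1}+\deg\omega_Y/\deg Y < m < q\nu_t$, applies Riemann--Roch to the line bundles, and shows after cancellation (using $r_1\nu_1+\cdots+r_t\nu_t=\sum d_i$) that the length is $c_1q+c_0$ with $c_1=0$ and $c_0 = r_t((1-\epsilon)\deg Y + g-1)>0$. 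You instead argue structurally from the strong HN filtration of $\S^q$: the top graded quotient $\V_t/\V_{t-1}$ is semistable of slope $-\nu_t q\deg Y$, so for your choice of $m$ its twist has negative slope strictly below $g-1$, whence $h^0=0$ by semistability and $h^1>0$ by Riemann--Roch; the vanishing of $H^2$ on a curve makes $H^1(\S^q(m))\to H^1((\V_t/\V_{t-1})(m))$ surjective, and Serre duality (together with $\nu_t>\max d_i$) kills the $H^1(\O(m-qd_i))$ terms, so $(R/I^{[q]})_m \cong H^1(\S^q(m))\neq 0$. In fact if one pushes your estimate to an equality (the lower quotients $\V_i/\V_{i-1}$, $i<t$, have large positive twisted slope for $q\gg0$, so contribute nothing to $h^1$), you recover exactly the paper's $c_0 = r_t(g-1+(1-\epsilon)\deg Y)$. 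The paper's computation is more precise but relies on the external formula (\ref{e25}) as a black box; your argument avoids that formula and makes transparent which piece of the filtration produces the obstruction. Your explanation of why $g=0$ forces the shift $-2$ (the integrality of $\nu_t q$ combined with $\deg Y=1$ for $\P^1$) is also correct. One small simplification: you only need the single short exact sequence $0\to\V_{t-1}(m)\to\S^q(m)\to(\V_t/\V_{t-1})(m)\to 0$ and $H^2(\V_{t-1}(m))=0$, not the full inductive propagation.
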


\begin{proof}
(1) Let $l= \tsd (R/I^{[q]})$. Thus the cokernel of 
\[\bigoplus_{i=1}^sH^0(Y, \O(l-qd_i)) \stackrel{f_1^q
, \ldots, f_s^q} {\longto} H^0(Y, \O(l))\]
must be nonzero, which implies $ H^1(Y, \S^q(m)) \neq 0$. This shows $l \leq q\nu_t+\frac{\deg \omega_Y}{\deg Y}$ when $q\gg0$. 

(2) Here we only treat the case $g\geq1$, the computations for the other case are almost identical. So we assume $g\geq 1$. 
To get the lower bound $\T\nu_t q\7-1$ in this case, let  $\l=\T\nu_t q\7-1$, it suffices to show the cokernel of 
\[\bigoplus_{i=1}^sH^0(Y, \O(\l-qd_i)) \stackrel{f_1^q
, \ldots, f_s^q} {\longto} H^0(Y, \O(\l))\]
is nonzero. To this end, we apply the following result from \cite{B2}, page 102:

For $q\gg0$ and $q\nu_{t-1}+\frac{\deg \omega_Y}{\deg Y} <m < q\nu_t,$
\begin{align}\label{e25}
h^0(Y, \S^q(m))=q(-r_1\nu_1- &\cdots - r_{t-1} \nu_{t-1})\deg Y+\\
&+m(r_1+\cdots + r_{t-1})\deg Y+\rank(\S_{t-1})(1-g). \notag
\end{align}
Here, $r_i$ is defined to be the rank of $\S_i/\S_{i-1}$ for $i=1, \cdots, t$. These numbers $r_1, \cdots, r_t$ satisfy
\begin{equation}\label{rank}
r_1+\cdots +r_t=\rank \S=s-1
\end{equation}
and
\begin{equation}\label{rankslope}
r_1\nu_1+\cdots +r_t\nu_t=\sum_{i=1}^s d_i.
\end{equation}
Let $\E=\T\nu_t q\7-\nu_tq$, then $\l=\nu_tq-1+\E$. Therefore, by (\ref{e25}), we have
\begin{align}\label{e25l}
h^0(Y, \S^q(\l))=q(-&r_1\nu_1- \cdots - r_{t-1} \nu_{t-1})\deg Y+\\
&+(\nu_tq-1+\E)(r_1+\cdots + r_{t-1})\deg Y+\rank(\S_{t-1})(1-g) \notag\\
=q(\nu_t&(s-1-r_t)-r_1\nu_1- \cdots - r_{t-1} \nu_{t-1})\deg Y+\notag\\
&+(-1+\E)(s-1-r_t)\deg Y+\rank(\S_{t-1})(1-g). \notag
\end{align}
On the other hand, since $\nu_t>\max\{d_i\}$, we have $-\l+qd_i<0$ for $q\gg0$, whence 
\[H^1(Y, \O(\l-qd_i))=H^0(Y, \O(-\l+qd_i))\otimes \omega_Y)=0, \text{ for }q\gg0.\]
So from Riemann-Roch Theorem, we get
\begin{equation}\label{e26}
h^0(\O(\l))-\sum_{i=1}^s h^0( \O(\l-qd_i))=
\l\deg Y +(1-g)-\sum_{i=1}^s (\l-qd_i)\deg Y-s(1-g)
\end{equation}
which can be simplified to
\begin{equation}\label{h0}
(s-1)(g-1)-\bigg((s-1)\l-\sum_{i=1}^s d_iq\bigg)\deg Y
=q\bigg(\sum_{i=1}^s d_i-(s-1)\nu_t\bigg)\deg Y+(s-1)\bigg((1-\E)\deg Y +g-1\bigg)
\end{equation}

Thus, by (\ref{les}), the length of the cokernel of 
\[\bigoplus_{i=1}^sH^0(Y, \O(\l-qd_i)) \stackrel{f_1^q
, \ldots, f_s^q} {\longto} H^0(Y, \O(\l))\]
equals
\begin{equation}\label{simple}
h^0(Y, \S^q(\l))+h^0(\O(\l))-\sum_{i=1}^s h^0( \O(\l-qd_i))=c_1q+c_0
\end{equation}
where, by adding the right hand sides of (\ref{e25l}) and (\ref{h0}) and using (\ref{rankslope}), 
\begin{align*}
c_1&=\deg Y\bigg(\nu_t(s-1-r_t)-r_1\nu_1- \cdots - r_{t-1} \nu_{t-1}+\sum_{i=1}^s d_i-(s-1)\nu_t\bigg)=0\\
\end{align*}
and 
\begin{align*}
c_0=&(-1+\E)(s-1-r_t)\deg Y+\rank(\S_{t-1})(1-g)+(s-1)((1-\E)\deg Y +g-1)\\
=&(-1+\E)(-r_t)\deg Y+\rank(\S_{t-1})(1-g)+(s-1)(g-1)\\
=&(1-\E)r_t\deg Y+(s-1-\rank(\S_{t-1}))(g-1).
\end{align*}
Since $\rank(\S_{t-1})+r_t=\rank\S=s-1$,
$$c_0=r_t((1-\E)\deg Y+g-1)>0.$$
The last inequality here is due to our assumption $g \geq 1$.
Therefore, the left hand side of (\ref{simple}) is positive for $q\gg0$.

\end{proof}

\begin{cor} \label{equiv}
Assume $R$ is of the form $k[x,y,z]/(f)$ such that the genus of $\proj R$ is at least one. $I=(f_1,f_2,f_3)$ and $\deg f_i=d$. Then $c^I(R) \geq \frac{3d}{2}$, and $c^I(R) = \frac{3d}{2}$ if and only if the syzygy bundle of $I$ is strongly semistable.  
\end{cor}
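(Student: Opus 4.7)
The plan is to split on whether the syzygy bundle $\S = \syz(f_1, f_2, f_3)$ is strongly semistable and invoke Theorems~\ref{SS} and~\ref{NSS} respectively. Since $s = 3$ and $d_1 = d_2 = d_3 = d$, the quantity $\frac{d_1 + \cdots + d_s}{s-1}$ equals $\frac{3d}{2}$, and the inequality $\frac{3d}{2} > d = \max_i\{d_i\}$ is automatic. So in the strongly semistable case Theorem~\ref{SS} immediately yields $c^I(R) = \frac{3d}{2}$.

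For the other direction, I would exploit the rigidity of rank-two bundles. Since $\rank \S = s-1 = 2$, the rank identity~(\ref{rank}) forces $r_1 + \cdots + r_t = 2$; failure of strong semistability means the strong HN filtration is nontrivial, i.e.\ $t \geq 2$; combined with $r_i \geq 1$ this pins down $t = 2$ and $r_1 = r_2 = 1$. Then the slope identity~(\ref{rankslope}) reduces to $\nu_1 + \nu_2 = 3d$, and together with $\nu_1 < \nu_2$ this yields $\nu_t = \nu_2 > \frac{3d}{2}$. In particular $\nu_t > d = \max_i\{d_i\}$, so the hypothesis of Theorem~\ref{NSS}(2) is satisfied; using the standing assumption $g \geq 1$, Theorem~\ref{NSS}(2) then gives $c^I(R) = \nu_t > \frac{3d}{2}$.

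Combining the two cases yields both the lower bound $c^I(R) \geq \frac{3d}{2}$ and the equivalence with strong semistability. There is no real obstacle once Theorems~\ref{SS} and~\ref{NSS}(2) are in hand; the only point to watch is the verification of their hypotheses, and in rank two the combinatorics of the strong HN filtration collapses cleanly to the single case $(t, r_1, r_2) = (2, 1, 1)$, which is precisely what makes both directions essentially automatic. The assumption $g \geq 1$ is used only in the ``only if'' direction, to invoke the sharper form of Theorem~\ref{NSS}(2) that identifies $c^I(R)$ with $\nu_t$.
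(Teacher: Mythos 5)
Your argument is correct and follows essentially the same route as the paper's proof: case split on strong semistability, apply Theorem~\ref{SS} in the semistable case, and in the non-semistable case use the rank-two constraint to force $t=2$, $r_1=r_2=1$, hence $\nu_1+\nu_2=3d$ with $\nu_1<\nu_2$, so $\nu_t=\nu_2>\frac{3d}{2}>d$, and Theorem~\ref{NSS}(2) applies under the standing assumption $g\geq 1$. The paper states these consequences of rank two more tersely, but the substance is identical.
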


\begin{proof}
If the syzygy bundle of $I$ is strongly semistable, then $c^I(R) = \frac{3d}{2}$ follows from Theorem~\ref{SS}. 

Assume the syzygy bundle of $I$ is not strongly semistable. Here we have $t=2$, $\nu_1<\nu_2$ and $\nu_1+\nu_2=3d$. Therefore $c^I(R) =\nu_2 > \frac{3d}{2}$.
\end{proof}

\begin{remark}
Assume $R$ is of the form $k[x,y,z]/(f)$ where the degree of $f$ is $h$. By Corollary 4.6 in \cite{B2}, we have 
\[\frac{3}{2}\leq c(R) \leq 2\]
and $c(R)=3/2$ if and only if the syzygy bundle of $(x,y,z)$ is strongly semistable.
Moreover, the Hilbert-Kunz multiplicity and the diagonal $F$-threshold of $R$ have the relation
\[e_{\text{HK}}(R)=h(c(R)^2-3c(R)+3).\]
\end{remark}

\begin{example}\label{favorite}
Consider $R_p=\Z/p\Z[x,y,z]/(x^4+y^4+z^4)$. The rank two syzygy bundle $\S$ of $(x,y,z)$ is strongly semistable if $p \equiv \pm 1 \mod 8$ and therefore, $c(R_p)=\frac{3}{2}$ in this case. On the other hand, $\S$ is not strongly semistable if $p \equiv \pm 3 \mod 8$ (See \cite[Example 4.1.8]{K}, \cite{B3,M2} or Example~\ref{favorite1}). So
$$
c(R_p)=\nu_t=\nu_2=
\frac{3}{2}+\frac{1}{2p}.$$

Our Macaulay 2 experiments give us the following formulae of the top socle degree functions for $p=3,5,7$:
$$
\tsd(R_p/\i m^{[q]})=
\begin{cases}
\frac{5q}{3}+1, \text{ if } p = 3\\
\frac{8q}{5}+1, \text{ if } p =5\\
\lfloor \frac{3q}{2}\rfloor+1,  \text{ if } p =7.
\end{cases}
$$
\end{example}

One might expect the following precise formula of $\tsd (R/\i m^{[q]})$, depending only on $c(R)$ and $a(R)$,  
\[\tsd (R/\i m^{[q]})=\lfloor c(R)q \rfloor+a(R).\]

However, the following example, suggested by Brenner to the author, indicates this is wrong.
\begin{example}\label{weird}
Let $R=\Z/2\Z[x,y,z]/(x^4+y^4+z^4+x^3y+y^3z+z^3x)$. Then
$$\tsd(R/\i m^{[q]})=\frac{3}{2}q.$$
\end{example}
Example~\ref{weird} also shows the inequality in Theorem~\ref{KV} could be strict for all $q$.

\section{Some applications}\label{application}
Throughout this section, let $R$ be the diagonal hypersurface $k[x,y,z]/(x^n+y^n+z^n)$ where $\charac k=p$ and $I$ be the ideal $(x^d,y^d,z^d)$ of $R$. In a recent paper \cite{KRV}, Kustin, Rahamati and Vraciu completely determined how the property $\projdim I< \8$ depends on the parameters $p,n$ and $d$. For every prime number $p$, they introduced the sets $S_p$ and $T_p$, which form a partition for the set of all nonnegative integers. One of their main results in that paper is that  $I$ has finite projective dimension if and only if $n \mid d$ or $\4\frac{d}{n}\3 \in T_p$ (see \cite{KRV} Theorem 6.2). In addition to that, they also explicitly described the minimal free resolutions for such ideals. Our purpose here is to use the results in \cite{KRV}, together with the characterization of the strong semistability of the syzygy bundle of $I$ obtained in Corollary~\ref{equiv}, to study how the strong semistability of the syzygy bundle of $I$ depends on parameters $p, n$ and $d$. We would like to point out that the strong semistability of this particular syzygy bundle has also been studied in great detail in \cite{K}, Chapter 4. In the rest of this section, we adopt all the notation of \cite{KRV} without explanation and refer to \cite{KRV} for details. Since we will apply Corollary~\ref{equiv}, we also assume $p \nmid n$ and $n \geq 3$ throughout this section. With this assumption, $\proj R$ is a smooth curve. We first prove a sufficient condition for the strong semistability of syzygy bundle.
\begin{theorem}\label{fpd}
If there are infinitely many $q$ such that $\projdim I^{[q]} = \8$, then the syzygy bundle of $I$ is strongly semistable.
\end{theorem}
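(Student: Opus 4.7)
The plan is to prove the contrapositive: if the syzygy bundle $\S=\syz(x^d,y^d,z^d)$ is not strongly semistable, then $\projdim_R R/I^{[q]}<\8$ for all but finitely many $q$. The first step is to invoke Corollary~\ref{equiv}, which under our standing assumption ($n\ge 3$, so $\proj R$ has positive genus) translates non-strong-semistability into $c^I(R)>3d/2$, i.e.\ $\nu_2>3d/2>\nu_1$ in the notation of Theorem~\ref{NSS}. By Langer's theorem on strong HN filtrations there is an $n_0$ and a sub-line-bundle $\mathcal{L}\subset \S^{p^{n_0}}$ whose Frobenius pullbacks $\mathcal{L}_q=(F^{n-n_0})^{\ast}\mathcal{L}$ realize the HN filtration $0\subset \mathcal{L}_q\subset \S^q$ for every $n\ge n_0$, with $\mathcal{L}_q$ strictly destabilizing.

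Next, I would show this filtration splits for $q\gg 0$. The obstruction lives in $\ext^1(\S^q/\mathcal{L}_q,\mathcal{L}_q)=H^1(Y,\mathcal{L}_q\otimes(\S^q/\mathcal{L}_q)^{-1})$, whose underlying line bundle has degree $2\deg\mathcal{L}_q-\deg\S^q$. This is strictly positive by destabilization and grows linearly in $q$ (as $\deg\mathcal{L}_q=p^{n-n_0}\deg\mathcal{L}$), so for $q\gg 0$ it exceeds $2g-2$ and Serre duality gives the vanishing, yielding $\S^q\cong \mathcal{L}_q\oplus \mathcal{M}_q$ with $\mathcal{M}_q=\S^q/\mathcal{L}_q$. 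If I can then show $\mathcal{L}_q=\O_Y(-a_q)$ and $\mathcal{M}_q=\O_Y(-b_q)$ for integers $a_q,b_q$, taking $\Gamma_{\ast}$ and using normality of $R$ (so the MCM module $\syz_R I^{[q]}$ is recovered from its sheafification) produces $\syz_R I^{[q]}=R(-a_q)\oplus R(-b_q)$, a free graded module, whence $\projdim_R R/I^{[q]}=2$, contradicting the hypothesis.

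The main obstacle is precisely this last point: $\mathcal{L}_q$ a priori only carries a class in $\pic(Y)$, which modulo $\Z\cdot[\O_Y(1)]$ can be a nonzero element of $\mathrm{Cl}(R)\cong J(Y)$, and Frobenius acts on the Jacobian part by multiplication by $p$. Were the Jacobian part of $[\mathcal{L}]$ not $p$-power torsion, the rank-one summands of $\S^q$ would never correspond to free modules and the theorem would fail. To rule this out I would combine \cite{KRV}'s explicit minimal free resolution of $I^{[q]}$ (together with their parametric characterization $\projdim I^{[q]}<\8$ iff $n\mid qd$ or $\lfloor qd/n\rfloor\in T_p$) with the known characterizations of strong semistability of the syzygy bundle of $(x^d,y^d,z^d)$ on a Fermat curve (cf.\ \cite{B3,K,M2} and Example~\ref{favorite}), checking that whenever $\S$ fails to be strongly semistable the arithmetic condition of \cite{KRV} is satisfied for all but finitely many $q=p^n$, forcing the Jacobian obstruction to be trivial.
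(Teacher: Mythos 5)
Your proposal has a genuine gap, which you flag yourself: the geometric strategy at best produces a destabilizing sub-line-bundle $\mathcal{L}_q \subset \S^q$ and (for $q \gg 0$, granting the $\ext^1$ vanishing) a splitting $\S^q \cong \mathcal{L}_q \oplus \mathcal{M}_q$, but nothing forces $\mathcal{L}_q$ and $\mathcal{M}_q$ to be twists $\O_Y(-a_q)$, $\O_Y(-b_q)$ of the structure sheaf. The class of the destabilizing $\mathcal{L}$ in $\pic^0(Y)$ can be an arbitrary point of the Jacobian, Frobenius acts on $\pic^0(Y)$ by multiplication by $p$, and the argument would close only if that class were $p$-power torsion — a condition your hypotheses do not supply. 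Your proposed remedy is circular: you want to match the arithmetic condition from \cite{KRV} for $\projdim I^{[q]}<\8$ against ``known characterizations of strong semistability of the syzygy bundle of $(x^d,y^d,z^d)$,'' but such characterizations are available only in sporadic cases (the Fermat quartic of Example~\ref{favorite}, the Fermat quintic); producing them in general is precisely what Theorem~\ref{fpd} and the ensuing Theorems~\ref{twist}--\ref{conv2} are for, so you would be assuming what is to be proved.

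The paper's proof is entirely different and avoids the bundle decomposition altogether. It cites Theorem 3.5 of \cite{KRV} for the fact that whenever $\projdim I^{[q]}=\8$ the top socle degree $\tsd(R/I^{[q]})$ has leading term $\frac{3d}{2}q$. Since $R$ is a complete intersection, $c^I(R)=\lim_q \tsd(R/I^{[q]})/q$ exists by Proposition~\ref{TSD}; if infinitely many $q$ satisfy $\projdim I^{[q]}=\8$, evaluating the limit along that subsequence forces $c^I(R)=\frac{3d}{2}$, and Corollary~\ref{equiv} then delivers strong semistability. In short, the paper replaces your attempted refinement of the Harder--Narasimhan splitting with one numerical computation of $c^I(R)$ and never needs to identify the destabilizing line bundle at all.
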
 

\noindent We remark here that this sufficient condition is equivalent to: (a) $n \nmid d$ and; (b)  there are infinitely many $q$ such that $\4\frac{qd}{n} \3\in S_p $.

\begin{proof}[Proof of Theorem~\ref{fpd}]
We know by Theorem 3.5 of \cite{KRV}, when $\projdim I^{[q]} = \8$, the leading term of $\tsd (R/I^{[q]})$ is $(\frac{3d}{2})q$. Therefore, if there are infinitely many $q$ such that $\projdim I^{[q]} = \8$, then $c^I(R)=\frac{3d}{2}$. It then follows from Corollary~\ref{equiv} that the syzygy bundle of $I$ is strongly semistable.
\end{proof}

We next discuss the converse of Theorem~\ref{fpd}. Assume $\projdim I^{[q_0]} < \8$ for some $q_0$ (hence $\projdim I^{[q]} < \8$ for all $q \geq q_0$, see \cite{PS}). We know from Corollary 1.7 of \cite{KV}, $c^{I^{[q_0]}}(R)$ equals the largest back twist in the minimal homogeneous resolution of $R/I^{[q_0]}$  by free $R$-modules, i.e., the largest $b_i$ in the following resolution of $R/I^{[q_0]}$:
\[0 \to R(-b_1)\oplus R(-b_2) \to \bigoplus_{i=1}^3 R(-q_0d) \to R \to 0.\]
Since $b_1+b_2-2q_0d=q_0d$, we have:
$$b_1 \neq b_2\Leftrightarrow \max\{b_1, b_2\} >\frac{3q_0d}{2}$$
i.e.
\[c^{I^{[q_0]}}(R)>\frac{3q_0d}{2},\] 
which is equivalent to (observe that $c^I( R)=\frac{1}{q_0}c^{I^{[q_0]}} (R)$)
$$c^I( R)>\frac{3d}{2}.$$
Thus, by Corollary~\ref{equiv}, we have the following theorem.
\begin{theorem}\label{twist}
Assume $\projdim I^{[q_0]} < \8$ for some $q_0$. Then the syzygy bundle of $I$ is strongly semistable if and only if $b_1=b_2$.
\end{theorem}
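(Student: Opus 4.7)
The proof is essentially already assembled in the paragraph preceding the statement; my plan is to consolidate those observations into a clean equivalence chain.

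The plan is to string together four equivalences linking strong semistability of the syzygy bundle to the condition $b_1 = b_2$. First I would invoke Corollary 1.7 of \cite{KV} (as already cited) to identify
\[
c^{I^{[q_0]}}(R) = \max\{b_1, b_2\},
\]
which works because $\projdim R/I^{[q_0]} < \infty$ makes $R/I^{[q_0]}$ accessible through its minimal graded free resolution, and the top socle degree of a finite-length module with such a resolution is controlled by the largest back-twist (minus the $a$-invariant, but that normalization cancels in the inequality comparisons below).

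Next I would use the displayed relation $b_1 + b_2 - 2q_0 d = q_0 d$, i.e.\ $b_1 + b_2 = 3q_0 d$. This forces the dichotomy $b_1 = b_2 \iff \max\{b_1,b_2\} = \tfrac{3 q_0 d}{2}$, and $b_1 \neq b_2 \iff \max\{b_1,b_2\} > \tfrac{3 q_0 d}{2}$. Translating this via the previous step gives
\[
b_1 = b_2 \;\iff\; c^{I^{[q_0]}}(R) = \tfrac{3 q_0 d}{2}.
\]
Then I would use the identity $c^I(R) = \tfrac{1}{q_0} c^{I^{[q_0]}}(R)$, which follows directly from the definition of $c^{I}(R)$ as a limit over $q$ (taking the sub-sequence $q = q_0 q'$ and using $(I^{[q_0]})^{[q']} = I^{[q_0 q']}$), to rewrite the condition as $c^I(R) = \tfrac{3d}{2}$.

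Finally I would apply Corollary~\ref{equiv}, which under the standing hypotheses ($R = k[x,y,z]/(f)$ with $\proj R$ of genus at least one, and $I$ generated by three forms of the same degree $d$) gives the equivalence
\[
c^I(R) = \tfrac{3d}{2} \;\iff\; \text{the syzygy bundle of } I \text{ is strongly semistable}.
\]
Combining the chain yields the theorem. There is no substantial obstacle: the only non-routine verification is that the sum of twists in the resolution equals $3 q_0 d$, but this is already granted in the preamble; and the verification that $c^I(R) = c^{I^{[q_0]}}(R)/q_0$ amounts to reindexing the limit defining the diagonal $F$-threshold.
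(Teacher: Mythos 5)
Your proof follows exactly the same chain of reasoning as the paper: Corollary 1.7 of \cite{KV} to identify $c^{I^{[q_0]}}(R)$ with $\max\{b_1,b_2\}$, the relation $b_1+b_2=3q_0d$, the rescaling $c^I(R)=c^{I^{[q_0]}}(R)/q_0$, and finally Corollary~\ref{equiv}. The only minor infelicity is your parenthetical about the $a$-invariant, which is not quite right as stated (the top socle degree of $R/(I^{[q_0]})^{[q]}$ is $q\max\{b_1,b_2\}+a(R)$, and the $a(R)$-contribution disappears when dividing by $q$ and letting $q\to\infty$, rather than cancelling in the inequality comparisons); but this does not affect the argument, which is sound and essentially identical to the paper's.
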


\noindent The following example shows the converse of Theorem~\ref{fpd} does not hold in general. 

\begin{example}
Let $k=\Z/5\Z$, $n=3$ and $d=2$. Then $\projdim I^{[p]} < \8$ (hence $\projdim I^{[q]} < \8$ for all $q>p$). On the other hand, one can check the largest back twist in the minimal homogeneous resolution of $R/I^{[p]}$ is $15$ (i.e, $b_1=b_2=15$). So the syzygy bundle of $I$ is strongly semistable.
\end{example}

\noindent Nonetheless, we could still expect the converse of Theorem~\ref{fpd} to hold when the parameters $p, n, d$ satisfy special conditions. We first treat the case when $n \nmid d$:
\begin{theorem}\label{conv1}
Assume $n \nmid d$. Then the converse of Theorem~\ref{fpd} holds for all the following cases:
\begin{itemize}
\item[Case 1:]$p=3$;
\item[Case 2:]$p \equiv 1 \mod 3$; 
\item[Case 3:]$p$ and $d$ are both odd;
\item[Case 4:]$3 \nmid n$.
\end{itemize}
In particular, if $p=2$, $3 \nmid n$ and $n \nmid d$, then the syzygy bundle of $I$ is never strongly semistable.
\end{theorem}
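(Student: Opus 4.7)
The plan is to apply Theorem~\ref{twist} in each case. Recall that the hypothesis of the converse of Theorem~\ref{fpd}---that only finitely many Frobenius powers of $I$ have infinite projective dimension---combined with \cite{PS}, is equivalent to the existence of some $q_0$ with $\projdim I^{[q_0]} < \infty$. That places us squarely in the setting of Theorem~\ref{twist}, so it suffices, in each of the four listed cases, to show that the two back twists $b_1, b_2$ in the minimal free resolution
\[
0 \to R(-b_1) \oplus R(-b_2) \to R(-q_0 d)^3 \to R \to R/I^{[q_0]} \to 0
\]
are \emph{not} equal.

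The key numerical constraint is $b_1 + b_2 = 3 q_0 d$, noted immediately before Theorem~\ref{twist}; hence $b_1 = b_2$ would force $q_0 d$ to be even. This parity criterion settles Case~3 at once: when both $p$ and $d$ are odd, every $q_0 = p^k$ is odd, so $q_0 d$ is odd and the symmetric configuration $b_1 = b_2 = \tfrac{3}{2} q_0 d$ is numerically impossible. In Cases~1, 2, and 4 the integer $q_0 d$ may well be even, so this parity shortcut does not suffice and we must enter the explicit structure of the resolutions constructed in \cite{KRV}. The proposal is to fix the minimal $q_0$ with $n \mid q_0 d$ or $\lfloor q_0 d/n \rfloor \in T_p$ (this $q_0$ exists in each of our cases by \cite[Thm.~6.2]{KRV}), read off the two minimal second syzygies of $I^{[q_0]}$ produced by their construction together with their internal degrees, and verify case-by-case---subject to the normalization $b_1 + b_2 = 3 q_0 d$---that the two degrees are distinct. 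I expect the main obstacle to be the detailed bookkeeping of KRV's degree data: for $p=3$ (Case~1) and $p \equiv 1 \pmod 3$ (Case~2) the asymmetry must be extracted from the fine structure of $T_p$ for the small or congruence-restricted prime, while for $3 \nmid n$ (Case~4) the obstruction instead comes from a divisibility mismatch between $3d$ and $n$ that persists independently of $p$.

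The concluding ``in particular'' assertion then follows by combining Case~4 with a separate appeal to \cite[Thm.~6.2]{KRV}: for $p = 2$, $3 \nmid n$, and $n \nmid d$, the combinatorics of $T_2$ produces some $q_0 = 2^k$ for which $\lfloor q_0 d/n \rfloor \in T_2$, so that $\projdim I^{[q_0]} < \infty$ and Case~4 rules out strong semistability of the syzygy bundle of $I$ for every such $d$.
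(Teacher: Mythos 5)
Your overall approach is the same as the paper's: reduce, via Theorem~\ref{twist} and \cite{PS}, to checking $b_1 \neq b_2$ in the minimal resolution of $R/I^{[q_0]}$ for a $q_0$ with $\projdim I^{[q_0]}<\infty$, using the explicit resolutions of \cite{KRV} for the case analysis. Your parity observation is a genuine improvement on the paper for Case~3: since $b_1 + b_2 = 3q_0 d$ with $q_0 = p^k$, the equality $b_1 = b_2$ would force $q_0 d$ to be even, which is impossible when both $p$ and $d$ are odd. This cleanly settles Case~3 without touching the KRV resolutions, whereas the paper folds all four cases into a single appeal to the ``bookkeeping'' of \cite[5.3--5.15]{KRV}. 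For Cases~1, 2, and~4, you leave the verification at the same level of detail as the paper does (a deferred case-by-case read-off of degree data from KRV), so no essential gap relative to the source. One minor phrasing slip: you call ``only finitely many $q$ with $\projdim I^{[q]}=\infty$'' the \emph{hypothesis} of the converse of Theorem~\ref{fpd}; it is the negation of the converse's conclusion, and what you are proving is the contrapositive---but the mathematical reduction you carry out is correct. Your reasoning for the ``in particular'' clause is also correct: since $T_2$ consists of all nonnegative integers, $\lfloor q d / n\rfloor \in T_2$ for every $q$, so $\projdim I^{[q]}$ is always finite, and then Case~4 applies. (The paper's own sentence reads ``never finite,'' which appears to be a typo for ``always finite''; your version gets the direction right.)
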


\begin{proof} When $\projdim I^{[q_0]}<\8$, the minimal free resolution of  $I^{[q_0]}$ has been given explicitly in \cite{KRV}, section 5. It is then a bookkeeping job (but a little tedious) to check case by case that, in any of the above cases, $b_1 \neq b_2$ (we refer to particularly 5.3, 5.4, 5.5, 5.12, 5.13, 5.14, 5.15 of \cite{KRV}). Therefore by Theorem~\ref{twist}, the syzygy bundle of $I$ is not strongly semistable. The $p=2$ case of this theorem follows from the fact that $T_2$ is the set of all nonnegative integers, hence $ \projdim I^{[q]}$ is never finite in such a case.
\end{proof}

\begin{example}\label{favorite1} We apply this theorem to recover the known result we mentioned earlier in Example~\ref{favorite}: Suppose $p$ is odd. The syzygy bundle of $(x,y,z)$ over $\Z/p\Z[x,y,z]/(x^4+y^4+z^4)$ is strongly semistable if and only if $p \equiv \pm 1 \mod 8$. Here we have $n=4$ which is not divisible by $3$, so Theorem~\ref{conv1} is applicable. First consider the case $p=8c+1$. We claim that $\4\frac{p^e}{n} \3\in S_p$ for every positive integer $e$. To see this, notice that $\4\frac{p^e}{4} \3=\frac{p^e-1}{4}$. We can express this number in the following base-$p$ expansion (see \cite{KRV} Notation 1.5) 
$$\frac{p^e-1}{4}=2cp^{e-1}+2cp^{e-2}+ \cdots +2cp+2c, $$
which involves only even digits. So the claim follows from \cite{KRV} Remark 1.6. Therefore, the syzygy bundle is strongly semistable in this case. For the case $p=8c+3$, we claim that  $\4\frac{p^2}{4} \3\in T_p$. This is because $\4\frac{p^2}{4} \3=16c^2+12c+2=(2c+1)p-(2c+1)$, which is a base-$p$ expansion involves at least one odd digit. Thus the syzygy bundle is not strongly semistable. We leave the remaining cases for the interested readers to verify.
\end{example}

One can also apply the above base-$p$ expansion method to reinvestigate the strong semistability of the syzygy bundle $\S$ of $(x^2,y^2,z^2)$ on the Fermat quintic $x^5+y^5+z^5=0$ (here $n=5$ and $d=2$ so Theorem~\ref{conv1} is applicable), which has been studied in \cite{B5}, Section 2 and in \cite{K}, Example 4.1.9. In particular, one can recover Corollary 2.1 of \cite{B5}, which says $\S$ is not strongly semistable when $p \equiv \pm 2 \mod 5$, by looking at the base-$p$ expansions of $\4 \frac{p^2}{5} \3$ of such primes. Not only that, one also obtains that $\S$ is strongly semistable when $p \equiv \pm 1 \mod 5$ via the same method. Again, we leave the detail for the interested readers.

For the case $n \mid d$, we know there exists a $q_0\gg0$ such that  $\projdim I^{[q_0]} < \8$. In such a case, the strong semistability is determined by the syzygy gap. We refer to \cite{M3} or \cite{BK} for the definition of syzygy gap.
\begin{theorem}\label{conv2}
Assume $n \mid d$. Let $a=\frac{d}{n}$. The syzygy bundle of $I$ is strongly semistable if and only if the syzygy gap of $x^a,y^a,(x+y)^a$ in $k[x,y]$ is zero. In particular, if $a$ is odd, then the syzygy bundle of $I$ is never strongly semistable.
\end{theorem}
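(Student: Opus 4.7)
The plan is to pass from the three-variable setting on the curve $Y = \proj R$ to a two-variable computation on $\P^1$ by exploiting the defining relation $x^n+y^n+z^n = 0$, and then to apply Theorem~\ref{twist}. Since $n \mid d$, I would write $d = na$ and set $u := x^n$, $v := y^n$. Inside $R$ one has $z^n = -(u+v)$, so $z^d = (z^n)^a = (-1)^a(u+v)^a$. Let $A \subseteq R$ denote the graded subring $k[u, v]$, which is a polynomial ring in two algebraically independent elements (each of $R$-degree $n$). Setting $I' := (u^a, v^a, (u+v)^a) \subseteq A$, one obtains $I = I' R$ as graded ideals (up to an irrelevant sign on the third generator).

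Next, I would lift the minimal graded free resolution of $A/I'$ to one of $R/I$ via flat base change. The ring $R$ is a two-dimensional Cohen--Macaulay domain that is module-finite over the two-dimensional regular ring $A$ (since $x, y, z$ are each integral over $A$), so miracle flatness makes $R$ a flat $A$-module. The minimal graded resolution of $A/I'$, with respect to the standard grading $\deg u = \deg v = 1$, takes the shape
\[
0 \to A(-\beta_1) \oplus A(-\beta_2) \to A(-a)^3 \to A \to 0,
\]
and evaluating the derivative of the numerator $1 - 3t^a + t^{\beta_1} + t^{\beta_2}$ of the Hilbert series of $A/I'$ at $t = 1$ forces $\beta_1 + \beta_2 = 3a$. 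By definition, the syzygy gap of $x^a, y^a, (x+y)^a$ in $k[x,y]$ is $|\beta_1 - \beta_2|$. Because the inclusion $A \hookrightarrow R$ rescales $A$-degrees by the factor $n$, flat base change turns the above resolution into
\[
0 \to R(-n\beta_1) \oplus R(-n\beta_2) \to R(-d)^3 \to R \to 0,
\]
the minimal graded free resolution of $R/I$ over $R$.

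Now I would invoke Theorem~\ref{twist}: since $\projdim_R I < \infty$, the syzygy bundle $\syz(I)$ is strongly semistable if and only if the two back-twists $b_1 = n\beta_1$ and $b_2 = n\beta_2$ coincide, equivalently $\beta_1 = \beta_2$, equivalently the syzygy gap of $x^a, y^a, (x+y)^a$ in $k[x,y]$ vanishes. For the ``in particular'' clause, if $a$ is odd then $\beta_1 + \beta_2 = 3a$ is odd, so $\beta_1 \neq \beta_2$ and the syzygy gap is strictly positive, whence $\syz(I)$ fails to be strongly semistable.

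The main technical hurdle will be the grading bookkeeping during base change: one must verify carefully that the inclusion $A \hookrightarrow R$ really multiplies degree shifts in $A$-free resolutions by exactly the factor $n$ when the result is read as an $R$-free resolution, so that the back-twists indeed emerge as $n\beta_1$ and $n\beta_2$. Combined with miracle flatness of $R$ over $A$, this is the only non-routine ingredient; everything else is a direct consequence of Theorem~\ref{twist} and the Hilbert-series identity $\beta_1 + \beta_2 = 3a$.
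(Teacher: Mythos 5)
Your argument is correct, and it takes a genuinely different route from the paper's. The paper simply cites Observation 5.3 of \cite{KRV} for the key equivalence $b_1=b_2 \Leftrightarrow \delta=0$ and then, for the parity clause, invokes Monsky's formula $\delta^2 = 4\lambda(k[x,y]/(x^a,y^a,(x+y)^a))-3a^2$ from \cite{M3}. You instead prove the equivalence from scratch by flat descent along $A=k[x^n,y^n]\hookrightarrow R$: since $R$ is a maximal Cohen--Macaulay module over the regular ring $A$ it is $A$-free, the minimal graded $A$-resolution of $A/(u^a,v^a,(u+v)^a)$ base-changes to the minimal graded $R$-resolution of $R/I$ (minimality survives because the $A$-differentials land in $\mathfrak m_A R\subseteq\mathfrak m_R$), and the degree shifts get multiplied by $n$, giving $b_i = n\beta_i$ directly. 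The Hilbert-series identity $\beta_1+\beta_2 = 3a$ then handles the ``in particular'' clause by a simple parity observation, which is cleaner than invoking Monsky's quadratic formula. What your approach buys is self-containedness (you do not need to look inside \cite{KRV}'s Section 5) and, as a byproduct, an independent proof that $\projdim_R I < \infty$ when $n\mid d$; what the paper's approach buys is brevity, since it delegates all the work to a result already established in \cite{KRV}. The one thing you flagged as a ``hurdle''---the degree-rescaling bookkeeping---is in fact routine once one reindexes the grading on $A$ so that $\deg u = \deg v = n$, making $A\hookrightarrow R$ a degree-preserving map of graded rings; then $R\otimes_A A(-\beta)\cong R(-n\beta)$ is immediate, so there is no real gap.
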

\begin{proof}
Let $\delta$ be the syzygy gap of $x^a,y^a,(x+y)^a$ in $k[x,y]$. 
From \cite{KRV} Observation 5.3, we know $b_1 = b_2$ if and only if $\delta=0$. So our conclusion follows from Theorem~\ref{twist}. There is a formula $\delta^2=4\l(k[x,y]/(x^a,y^a,(x+y)^a))-3a^2$ (\cite{M3}, Lemma 1). Hence if $a$ is odd, $\delta \neq 0$.
\end{proof}

In the case $n=3$ and $p \equiv 1 \mod 3$, we have the following characterization of strong semistability.
\begin{theorem}
Assume $n=3$, $p \equiv 1 \mod 3$ and $3 \nmid d$. Then the syzygy bundle of $I$ is strongly semistable if and only of $\projdim I=\8$.
\end{theorem}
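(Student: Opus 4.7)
The plan is to deduce the theorem as a formal corollary of Theorem~\ref{fpd} and Theorem~\ref{conv1} once one invokes the Peskine--Szpiro/Herzog theorem on the Frobenius invariance of finite projective dimension. Recall that over a Noetherian ring of prime characteristic, a finitely generated module $M$ has finite projective dimension if and only if $F^e(M)$ does for some (equivalently, every) $e\geq 1$. Applied to $M=R/I$, this yields $\projdim R/I=\infty$ iff $\projdim R/I^{[q]}=\infty$ for every $q\geq 1$, which in turn is equivalent to $\projdim I=\infty$ iff $\projdim I^{[q]}=\infty$ for every $q\geq 1$.

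For the direction ($\Leftarrow$), suppose $\projdim I=\infty$. By the remark above, $\projdim I^{[q]}=\infty$ for every $q\geq 1$, and in particular for infinitely many $q$. Theorem~\ref{fpd} then immediately delivers strong semistability of the syzygy bundle of $I$.

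For the direction ($\Rightarrow$), suppose the syzygy bundle of $I$ is strongly semistable. The hypotheses $n=3$, $3\nmid d$, and $p\equiv 1 \mod 3$ match the conditions of Case 2 of Theorem~\ref{conv1} (with $n\nmid d$ being precisely $3\nmid d$), so the converse of Theorem~\ref{fpd} applies in the present setting: there must exist infinitely many $q$ with $\projdim I^{[q]}=\infty$. Pick any single such $q$; invoking Peskine--Szpiro once more, $\projdim R/I^{[q]}=\infty$ forces $\projdim R/I=\infty$, hence $\projdim I=\infty$.

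The main obstacle is essentially already absorbed into Theorem~\ref{conv1}, whose proof relies on the explicit resolutions in \cite{KRV} to guarantee $b_1\neq b_2$ for every $q_0$ with $\projdim I^{[q_0]}<\infty$ in the relevant cases. Given that black box, the present theorem is just a formal combination of Theorems~\ref{fpd} and \ref{conv1} with Peskine--Szpiro. It remains only to remark that the standing assumptions $p\nmid n$ and $n\geq 3$ ensure $\proj R$ is the smooth Fermat cubic of genus~$1$, so Corollary~\ref{equiv} (implicit in the statement of Theorem~\ref{conv1} through its reliance on Theorem~\ref{twist}) is available.
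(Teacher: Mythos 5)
The logical skeleton of your proof matches the paper's (combine Theorem~\ref{fpd} and Theorem~\ref{conv1}, then bridge the gap between ``$\projdim I=\infty$'' and ``$\projdim I^{[q]}=\infty$ for infinitely many $q$''), but there is a genuine gap in the bridging step. You assert, and attribute to ``Peskine--Szpiro/Herzog,'' that for a finitely generated module $M$ over a Noetherian local ring of characteristic $p$ one has $\projdim M<\infty$ if and only if $\projdim F^e(M)<\infty$ for some (equivalently every) $e\geq 1$. Only one half of this equivalence is the Peskine--Szpiro theorem: if $\projdim M<\infty$, then the Frobenius functor is exact on a finite free resolution of $M$, so $\projdim F^e(M)<\infty$ for all $e$. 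The converse direction --- that $\projdim F^e(M)<\infty$ for a single $e$ forces $\projdim M<\infty$ --- is not in Peskine--Szpiro, and it does not follow from Herzog's theorem either, since Herzog's hypothesis is the vanishing of all higher $\tor_i^R(M,{}^{f^n}\!R)$ for infinitely many $n$, which is strictly stronger than $F^e(M)$ merely having a finite free resolution. Your direction~($\Leftarrow$) uses exactly this unproved converse: from $\projdim I=\infty$ you conclude $\projdim I^{[q]}=\infty$ for every $q$, whose contrapositive is the problematic statement.

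This is precisely where the paper's proof differs from yours: it invokes Proposition~8.5 of \cite{KRV}, which (in the setting $n=3$, $p\equiv 1\bmod 3$, $3\nmid d$) provides the needed equivalence directly via the arithmetic of the sets $S_p$ and $T_p$ governing finite projective dimension of the bracket powers, rather than by any general homological principle. Your direction~($\Rightarrow$) is fine: there, you deduce $\projdim I^{[q]}=\infty$ for some $q$ from Theorem~\ref{conv1} and then conclude $\projdim I=\infty$, and the contrapositive of that last inference \emph{is} the genuine Peskine--Szpiro direction. So the fix is only in~($\Leftarrow$): either cite Proposition~8.5 of \cite{KRV} for the implication $\projdim I=\infty\Rightarrow\projdim I^{[q]}=\infty$ for all $q$ in this specific arithmetic situation, or supply (and carefully reference) a theorem giving the descent of finite projective dimension along Frobenius over a hypersurface --- that is a nontrivial result, not an application of \cite{PS}.
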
 
\begin{proof}
This follows from Proposition 8.5 of \cite{KRV}, Theorem~\ref{fpd} and Theorem~\ref{conv1} immediately. 
\end{proof}
\noindent We finally use Theorem~\ref{conv1} to recover a result of Brenner (Proposition 1 in \cite{B4}, see also Lemma 4.2.8 in \cite{K} for an extended version). 
\begin{theorem}Fix $d>0$ and a prime number $p$. For every positive integer $n_0$, there exists $n>n_0$ such that the syzygy bundle of $I=(x^d,y^d,z^d)$ is not strongly semistable on the smooth projective curve defined by $x^n+y^n+z^n=0$.
\end{theorem}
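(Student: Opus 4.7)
The plan is to invoke the contrapositive of Case~4 of Theorem~\ref{conv1}: it suffices to exhibit $n > n_0$ with $n \geq 3$, $p \nmid n$, $3 \nmid n$, $n \nmid d$, and $\projdim_R I^{[q_0]} < \infty$ for some power $q_0$ of $p$. By the cited result of \cite{PS} the finite projective dimension then propagates to every $q \geq q_0$, and Case~4 of Theorem~\ref{conv1} in contrapositive form forces the syzygy bundle of $I$ on $\proj k[x,y,z]/(x^n+y^n+z^n)$ to fail to be strongly semistable.

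When $p = 2$ this is immediate: the ``in particular'' clause of Theorem~\ref{conv1} yields non-strong-semistability directly for any odd $n > \max\{n_0, d\}$ with $3 \nmid n$, for example $n \equiv 1 \pmod 6$. For odd $p$, write $d = 2^k m$ with $m$ odd, and set $n = 2^k p^e - r$ for $r \in \{1,2\}$ and large $e$. A short geometric-series computation gives $d p^e/n = m\bigl(1 - r/(2^k p^e)\bigr)^{-1} = m + O(p^{-e})$, so $\lfloor d p^e/n \rfloor = m$ once $e$ is large enough. Since $m$ and $p$ are both odd, the base-$p$ expansion $m = \sum_i m_i p^i$ must contain at least one odd digit (otherwise $m \equiv \sum_i m_i \equiv 0 \pmod 2$ would contradict $m$ odd), so $m \in T_p$ by the digit criterion recalled in Example~\ref{favorite1}. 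The \cite{KRV} criterion cited at the beginning of this section then gives $\projdim_R I^{[p^e]} < \infty$.

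The residual bookkeeping is to arrange that $n = 2^k p^e - r$ is coprime to $3p$: the condition $p \nmid n$ holds trivially for $r \in \{1,2\}$ and any odd $p \geq 3$, while $3 \nmid n$ is a single nonzero-residue condition on $r$ modulo $3$ that is satisfied by at least one of $\{1,2\}$ in every case (the choice depending on $k \bmod 2$, $e \bmod 2$ and $p \bmod 3$). The remaining requirements $n \geq 3$, $n > d$, $n \nmid d$, $n > n_0$ are automatic for $e$ sufficiently large. The only nontrivial part of the argument is this short residue case analysis; once it is done, Case~4 of Theorem~\ref{conv1} delivers the conclusion.
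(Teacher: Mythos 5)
Your argument is correct, and it takes a genuinely different route from the paper's. The paper's proof starts from an \emph{arbitrary} element $c\in T_p$ (using only that $T_p\ne\emptyset$, by Remark~1.6 of \cite{KRV}), then chooses $q$ so large that the interval around $qd/c$ contains several consecutive integers, and picks $n$ in that window to secure $3\nmid n$, $p\nmid n$, $n\nmid qd$ and $\lfloor qd/n\rfloor=c$, all at once. You instead split on the parity of $p$: for $p=2$ you invoke the ``in particular'' clause of Theorem~\ref{conv1} directly, while for odd $p$ you construct $n=2^kp^e-r$ explicitly, arrange $\lfloor p^e d/n\rfloor$ to equal the odd part $m$ of $d$, and \emph{identify} a specific element of $T_p$ (namely $m$) via the parity-of-digits observation. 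The paper's approach buys uniformity in $p$ and avoids having to produce any concrete member of $T_p$; your approach buys explicitness, producing a closed-form $n$ and a concrete certificate $m\in T_p$. The residue bookkeeping in your last paragraph is correct: $p\nmid n$ is automatic for $r\in\{1,2\}$ and $p\geq 3$, and one of $r=1,2$ always makes $3\nmid n$ once the residue of $2^kp^e\bmod 3$ is fixed.

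One small point worth flagging: Example~\ref{favorite1} (and the underlying \cite{KRV} Notation~1.5) appears to use a \emph{balanced} base-$p$ expansion with digits allowed to be negative (witness the displayed form $(2c+1)p-(2c+1)$), rather than the ordinary expansion with digits in $\{0,\dots,p-1\}$. Your phrasing ``the base-$p$ expansion $m=\sum_i m_ip^i$'' reads as the ordinary one, but the parity argument you use — that $p$ odd forces $m\equiv\sum_i m_i\pmod 2$, so all-even digits would make $m$ even — is insensitive to which integer-digit representation is in play, so the conclusion $m\in T_p$ stands either way. It would be cleaner to say this explicitly so the reader does not worry about which normalization \cite{KRV} uses.
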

\begin{proof}
By Remark 1.6 in \cite{KRV}, the set $T_p$ is not empty. Assume $c \in T_p$. For every $n_0$, we choose $q$ large enough such that $\frac{qd}{c}>\max\{n_0+4, 4c+4, d+4\}$. Let $a$ be an integer such that $a\leq \frac{qd}{c} <a+1$. If $3 \mid a-1$, let $n$ be one of $a-2,a-3$ which is not divisible by $p$. If $3 \nmid a-1$, let $n$ be one of $a-2, a-1, a$ which is neither divisible by $3$ nor divisible by $p$. Then such an $n$ satisfies conditions $3 \nmid n$, $n \nmid qd$ and $\4\frac{qd}{n}\3 =c \in T_p$. Thus the conclusion follows from Theorem~\ref{conv1}.
\end{proof}

\section{Asymptotic behavior of length of socle of Frobenius powers}\label{sl}
Our main question in this section is:

How does $\length (\soc (R/I^{[q]}))$ vary as $q$ vary?

In the case that $I$ has finite projective dimension, it is well-known that $\length (\soc (R/I^{[q]}))$ equals the constant $\length (\soc (R/I))$.   
In the general situation, we recall a result of Yackel \cite{Y},  which asserts such a length function cannot grow faster than $O(q^{\max\{\dim R-2,0\}})$.
\begin{theorem}(Yackel)\label{Yackel}
There exists a constant $c_R$, s.t.
\[\lambda (\soc(R/I^{[q]})) \leq c_Rq^{\max\{0, n-2\}},\]
where $n=\dim R$.
\end{theorem}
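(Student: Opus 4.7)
\emph{Proof plan.} The approach is to identify the socle length with a top Betti number over an ambient regular local ring, and then to bound that top Betti number through a change-of-rings argument followed by an induction on a regular sequence of length $n-2$.

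First, by passing to the completion and enlarging the residue field, I may assume $R$ is a complete local ring with infinite residue field, and write $R=S/J$ with $S$ a complete regular local ring of dimension $d$ (the embedding dimension of $R$). The socle of any finite length $R$-module $M$ equals its socle as an $S$-module, and since $S$ is regular of dimension $d$ the Koszul complex on a regular system of parameters of $S$ is a minimal $S$-free resolution of $k$, whose top homology against $M$ is $(0:_M \i m_S)$. Hence
\[
\lambda(\soc(R/I^{[q]}))=\dim_k\tor^S_d(k,R/I^{[q]})=\beta_d^S(R/I^{[q]}).
\]

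Second, I apply the change-of-rings spectral sequence
$E^2_{p,j}=\tor^R_p(\tor^S_j(R,k),R/I^{[q]})\Rightarrow\tor^S_{p+j}(k,R/I^{[q]})$. Since each $\tor^S_j(R,k)$ is a $k$-vector space of dimension $\beta_j^S(R)$ (independent of $q$) and vanishes for $j>\projdim_S R$, this yields the estimate
\[
\beta_d^S(R/I^{[q]})\le\sum_{p}\beta_{d-p}^S(R)\cdot\beta_p^R(R/I^{[q]}),
\]
so it suffices to bound the top-range Betti numbers $\beta_p^R(R/I^{[q]})$ (for $p$ in a finite range depending only on $R$) by $O(q^{n-2})$.

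Third, I quotient $R$ by a regular sequence $y_1,\dots,y_{n-2}$ consisting of sufficiently general elements of $\i m$ (these exist because the residue field is infinite), obtaining the $2$-dimensional quotient $\bar R=R/(y_1,\dots,y_{n-2})$. Iterating the short exact sequences $0\to R/(y_1,\dots,y_{i-1})\xrightarrow{y_i}R/(y_1,\dots,y_{i-1})\to R/(y_1,\dots,y_i)\to 0$ and the resulting long exact Tor sequences, the top $R$-Betti numbers of $R/I^{[q]}$ are controlled by the corresponding Betti numbers of $\bar R/\bar I^{[q]}$ (for which the socle length is uniformly bounded in the two-dimensional setting) multiplied by Hilbert-Kunz-type length factors that contribute at most one extra power of $q$ per reduction step.

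\emph{Main obstacle.} The crux is the induction on the regular sequence $y_1,\dots,y_{n-2}$: I must verify that each reduction contributes at most a single factor of $q$ to the bound on the top Betti numbers, not more. This requires a careful analysis of the middle Koszul homologies appearing in the long Tor sequences, and use of the Cohen-Macaulay structure (after modding out $H^0_{\i m}(R)$ to reduce to positive depth) to prevent accumulation of extra factors of $q$. The base case $n=2$, that the socle length of $\bar R/\bar I^{[q]}$ is uniformly bounded in $q$, can be handled by direct Koszul arguments in general two-dimensional rings or, in the graded case, via the syzygy-bundle methods developed in Section~\ref{AB-tsd}.
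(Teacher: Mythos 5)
The paper does not prove this statement; it is quoted directly from Yackel's article~\cite{Y}, so there is no internal argument here to compare against, and your proposal must be judged on its own merits.

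Your first two steps are essentially sound: identifying $\lambda(\soc(R/I^{[q]}))$ with the top Betti number $\beta^S_d(R/I^{[q]})$ over a regular local ring $S$ surjecting onto $R$, and invoking the change-of-rings spectral sequence to bound this by $\sum_{p+j=d}\beta^S_j(R)\,\beta^R_p(R/I^{[q]})$, correctly reduce the problem to bounding $\beta^R_p(R/I^{[q]})$ for $p$ in a fixed finite window. The genuine gap is precisely the step you flag as the ``main obstacle,'' and it is more serious than your sketch admits. When you pass from $R$ to $\bar R = R/yR$ for a single regular element $y$, the relevant change-of-rings spectral sequence has two nonvanishing rows, $\tor^{\bar R}_p\bigl(k,\,\bar R/\bar I^{[q]}\bigr)$ and $\tor^{\bar R}_{p-1}\bigl(k,\,(0:_{R/I^{[q]}}y)\bigr)$, because $y$ is \emph{not} regular on the Artinian module $R/I^{[q]}$. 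The second row involves $(I^{[q]}:y)/I^{[q]}$, whose length equals $\lambda(\bar R/\bar I^{[q]})=O(q^{n-1})$; nothing in your argument controls the $\bar R$-Betti numbers of this annihilator module or shows it contributes only one additional power of $q$, and the informal phrase ``multiplied by Hilbert--Kunz-type length factors'' does not address it. In addition, your base case --- uniform boundedness of the socle length for $\dim R=2$ --- is itself the $n=2$ instance of the theorem and is left unproved; the syzygy-bundle techniques of Section~\ref{AB-tsd} only cover graded normal domains, not general two-dimensional local rings. As written, both the inductive step and the base case are open, so the proposal does not constitute a proof.
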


Unfortunately, other than the above result of Yackel, very little is known regarding the asymptotic behavior of this length function when $I$ has infinite projective dimension, even in the hypersurface situation. Therefore, any explicit computation on such a length function in special cases would be valuable, and even could potentially initiate some new study. The main purpose of this section is to explicitly calculate such length functions for $I=\i m$ over a special class of hypersurface rings, which will be specified below. 

In \cite{BC}, Buchweitz and Chen investigated a lower bound $m(q)$ of the top socle degree of $R/\i m^{[q]}$ among all dimension $n$ hypersurfaces $(R, \i m)$, and its relation with minimal Hilbert-Kunz function. Among other things, they showed the following:

\begin{theorem}[Buchweitz-Chen]\label{BC} Fix $n, d>0$. Let $q$ be a power of $p$. Let 
\[m(q)=\bigg\lfloor \dfrac{(n+1)(q-1)+(d-1)}{2}\bigg \rfloor\]  
and \[L(q)= \text{ the coefficient of } t^{m(q)} \text{ in } (1-t^d)(1-t^q)^{n+1}(1-t)^{-n-2}.\]
Assume $f$ is a homogeneous polynomial of degree $d$ in $S=k[x_0,x_1, \cdots, x_n]$. Let $R$ be the hypersurface ring $S/fS$ and $\i m$ the ideal of $R$ generated by the images of $x_0, x_1, \cdots, x_n$ in $R$. Then the top socle degree $\tsd (R/\i m^{[q]})$ is at least $m(q)$ and the Hilbert-Kunz function of $R$ with respect to $\i m$ is at least $L(q)$. Moreover, the following are equivalent:
\begin{itemize}
\item[(1)] $\tsd (R/\i m^{[q]})=m(q)$;
\item[(2)] The Hilbert-Kunz function of $R$ with respect to $\i m$ equals $L(q)$.
\end{itemize} 

\end{theorem}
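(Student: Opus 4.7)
The plan is to extract a single functional equation relating the Hilbert series $H(t) := H_{R/\i m^{[q]}}(t)$ to the polynomial $(1-t^d)H_A(t)$, where $A := S/(x_0^q,\ldots,x_n^q)$, and then read the entire theorem off its coefficients. The ring $A$ is standard-graded Artinian Gorenstein with socle in the top degree $s := (n+1)(q-1)$ and Hilbert series $H_A(t) = (1-t^q)^{n+1}/(1-t)^{n+1}$. Since $R/\i m^{[q]} = A/fA$, I would run the four-term exact sequence
\[
0 \to (0 :_A f) \to A \xrightarrow{\cdot f} A(d) \to (A/fA)(d) \to 0
\]
together with Gorenstein duality $(0 :_A f) \cong \hom_A(A/fA, A) \cong (A/fA)^{\vee}(-s)$ to obtain
\[
H(t) \;=\; (1-t^d) H_A(t) \;+\; t^{s+d} H(t^{-1}).
\]
Writing $H(t) = \sum h_j t^j$ and $P(t) := (1-t^d) H_A(t) = \sum c_j t^j$, this unfolds into the coefficient relations $h_j - h_{s+d-j} = c_j$ and (from the symmetry $H_A(t^{-1}) = t^{-s} H_A(t)$) $c_j = -c_{s+d-j}$.

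For the top socle degree bound I will establish the combinatorial claim $c_{m(q)} > 0$. Since the coefficients $a_j$ of $H_A(t) = (1+t+\cdots+t^{q-1})^{n+1}$ are symmetric about $s/2$ and strictly unimodal (for $n \ge 1$), the identity $a_j = a_{s-j}$ gives
\[
c_j \;=\; a_j - a_{j-d} \;=\; a_{s-j} - a_{j-d},
\]
and the inequality $s - j > j - d$ (equivalent to $j < (s+d)/2$) combined with strict unimodality yields $c_j > 0$ for every $j < (s+d)/2$. In particular $c_{m(q)} > 0$, because by construction $m(q) = \lfloor (s+d-1)/2 \rfloor < (s+d)/2$. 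If $\tsd(R/\i m^{[q]})$ were smaller than $m(q)$, then both $h_{m(q)}$ and $h_{s+d-m(q)}$ would vanish, forcing $c_{m(q)} = 0$, a contradiction.

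For the Hilbert-Kunz lower bound I will sum $h_j - h_{s+d-j} = c_j$ over $0 \le j \le m(q)$, which gives
\[
L(q) \;=\; \sum_{j=0}^{m(q)} h_j \;-\; \sum_{k=s+d-m(q)}^{s+d} h_k \;=\; B - C
\]
with $B, C \ge 0$. Depending on the parity of $s+d$, the total length $\length(R/\i m^{[q]})$ equals either $B + C$ (odd case, where $m(q)$ and $s+d-m(q) = m(q)+1$ partition the relevant degrees) or $B + h_{(s+d)/2} + C$ (even case, missing only the single middle degree, at which $P$ automatically vanishes by antisymmetry). In either situation $\length(R/\i m^{[q]}) \ge L(q) + 2C \ge L(q)$. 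The equivalence $(1)\Leftrightarrow(2)$ then drops out of the same bookkeeping: $\length = L(q)$ forces $C = 0$ (and $h_{(s+d)/2} = 0$ in the even case), which kills every $h_k$ with $k > m(q)$ and so forces $\tsd \le m(q)$, whence $\tsd = m(q)$ by the previous paragraph; conversely $\tsd = m(q)$ immediately collapses the identity to $\length = L(q)$.

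The main obstacle will be the combinatorial claim $c_{m(q)} > 0$: it rests on genuine strict unimodality of the coefficients of $(1+t+\cdots+t^{q-1})^{n+1}$, which fails outright for $n = 0$ (so the theorem implicitly requires $n \ge 1$), and it needs care in the regime where $d$ is comparable to $s/2$, where the naive chain $a_j \ge a_{j-1} \ge \cdots \ge a_{j-d}$ breaks and one has to fall back on the symmetric form $c_j = a_{s-j} - a_{j-d}$ with $s-j > j-d$. Everything else --- the Gorenstein functional equation, the antisymmetry of $P(t)$, and the case-by-case bookkeeping --- is then formal.
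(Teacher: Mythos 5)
The paper does not prove this statement: it is quoted as Theorem [Buchweitz--Chen] and attributed to reference [BC], and only a consequence --- the short exact sequences (\ref{ses}) for $i\le m(q)$ under the minimality hypothesis --- is extracted from the proof in [BC] and used later in Theorem~\ref{main}. So there is no in-paper proof to compare against; your proposal amounts to a reconstruction of the Buchweitz--Chen argument itself, and it appears to be the same mechanism: Gorenstein duality in the complete intersection $A=S/\x^{[q]}$ gives $(0:_A f)\cong\hom_A(A/fA,A)\cong (A/fA)^\vee(-s)$, the resulting functional equation $H(t)=(1-t^d)H_A(t)+t^{s+d}H(t^{-1})$ yields $h_j-h_{s+d-j}=c_j$ with $c_j=-c_{s+d-j}$, positivity of $c_{m(q)}$ forces $h_{m(q)}>0$, and the bookkeeping identity $\length(R/\i m^{[q]})=L(q)+2C\ (+h_{(s+d)/2}$ in the even case$)$ gives both the Hilbert--Kunz bound and the equivalence of (1) and (2). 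The logic is sound.

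Two points deserve tightening. First, the coefficients $a_j$ of $H_A(t)$ are never literally strictly unimodal: symmetry forces the two-term plateau $a_{(s-1)/2}=a_{(s+1)/2}$ whenever $s$ is odd. What actually holds for $n\ge1$, and what your argument needs, is strict monotone increase on $[0,\lfloor s/2\rfloor]$; combined with the reflection $c_j=a_{s-j}-a_{j-d}$ and the observation that $m(q)-d\le s/2$, this suffices. Second, the positivity $c_{m(q)}>0$ (and with it the claim $\tsd(R/\i m^{[q]})\ge m(q)$) fails when $q$ is small relative to $d$: if $d>(n+1)(q-1)+2$ then $m(q)>s$, $c_{m(q)}=0$, and indeed $f\in\i m^{[q]}$ so $R/\i m^{[q]}=A$ has top degree $s<m(q)$. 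This is a hypothesis present in [BC] but dropped from the statement as quoted here; it is not a defect in your reasoning, but the clause ``Fix $n,d>0$, let $q$ be a power of $p$'' is genuinely insufficient by itself and you should flag the needed restriction (e.g.\ $d\le(n+1)(q-1)+2$, or simply $q\gg0$ for fixed $n,d$) rather than only the $n\ge1$ issue.
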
 

\begin{definition}\label{mHK}
We say a hypersurface ring \emph{attains the minimal Hilbert-Kunz fuction} if it satisfies any of the equivalent conditions in the above theorem of Buchweitz and Chen.
\end{definition}

\begin{ex}[Buchweitz-Chen] The hypersurfaces $k[x,y,z,w]/(xy-zw)$ and the Cayley's cubic surface $k[x,y,z,w]/(xyz+xyw+xzw+yzw)$ both attain the minimal Hilbert-Kunz function, see \cite{BC} for details.
\end{ex}

The following theorem is the main result of this section. 
\begin{theorem}
\label{main}
Adopt all of the notation of Theorem~\ref{BC}. Assume $R=S/fS$ attains the minimal Hilbert-Kunz function. Then the entire socle of $R/\i m^{[q]}$ must live in the top degree spot. In other words, no socle element of $R/\i m^{[q]}$ is in degree $< m(q)$.
\end{theorem}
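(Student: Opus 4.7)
The plan is to exploit Gorenstein self-duality of the ambient complete intersection $T := S/\i n^{[q]}$ (where $\i n = (x_0,\ldots,x_n)\subseteq S$), together with the explicit Hilbert function of $T/fT = R/\i m^{[q]}$ forced by the minimum-HK hypothesis. Set $a := (n+1)(q-1)$ and $h_i := \dim_k T_i$. Then $T$ is Artinian Gorenstein with socle in degree $a$, and $h_i$ is strictly unimodal and symmetric ($h_i = h_{a-i}$). The multiplication-by-$f$ exact sequence
\[
0 \to K(-d) \to T(-d) \xrightarrow{\,f\,} T \to T/fT \to 0, \quad K := \mathrm{ann}_T(f),
\]
yields $H_i := \dim_k(T/fT)_i = h_i - h_{i-d} + \dim K_{i-d}$ for every $i$.

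First I would translate the minimum-HK hypothesis, via Theorem~\ref{BC}, into the two facts $H_i = 0$ for $i>m(q)$ and $\sum_i H_i = L(q)$. Telescoping gives $\sum_{i=0}^m (h_i - h_{i-d}) = h_m + \cdots + h_{m-d+1} = L(q)$, so substitution into $\sum_{i=0}^m H_i = L(q)$ forces $\sum_{j=0}^{m-d} \dim K_j = 0$, i.e.\ $K_j = 0$ for $j \le m-d$. The vanishing $H_i = 0$ for $i>m$ combined with the displayed identity then gives $\dim K_j = h_j - h_{j+d}$ for $j > m-d$, and strict unimodality together with the symmetry $h_\bullet = h_{a-\bullet}$ sharpens this to $K_j = 0$ for $j < a-m$ and $\dim K_j = h_j - h_{j+d}$ (with $h_\bullet = 0$ for $\bullet > a$) for $a-m \le j \le a$. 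Equivalently, multiplication by $f$ on $T$ has maximum rank in every degree.

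Next I would invoke Gorenstein duality on $T$: the perfect pairing $T_e \times T_{a-e} \to T_a \cong k$ identifies the $k$-dual of $(T/fT)_e = T_e/(fT)_e$ with $K_{a-e}$ (using that $fT$ and $K$ are orthogonal complements under the pairing), and the relation $\langle x_i y, k \rangle = \langle y, x_i k \rangle$ shows that a class $\bar y \in (T/fT)_e$ is killed by $\i n$ exactly when the corresponding functional on $K_{a-e}$ descends to $(K/\i n K)_{a-e}$. Therefore $\dim_k \soc(R/\i m^{[q]})_e = \dim_k (K/\i n K)_{a-e}$, and the theorem reduces to showing that the $T$-module $K$ is minimally generated entirely in the single degree $a-m$.

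Finally, I would establish $\i n \cdot K_{j-1} = K_j$ for every $j > a-m$. The maximum-rank conclusion of the first step makes both $0 \to K_{j-1} \to T_{j-1} \xrightarrow{f} T_{j-1+d} \to 0$ and $0 \to K_j \to T_j \xrightarrow{f} T_{j+d} \to 0$ short exact in this range, while the multiplication maps $\i n \otimes T_{j-1} \to T_j$ and $\i n \otimes T_{j-1+d} \to T_{j+d}$ are surjective since $T$ is generated in degree one. A snake-lemma/diagram chase across these two rows should then force $\i n K_{j-1} = K_j$. The main obstacle lies here: writing $k \in K_j$ as $k = \sum x_i g_i$ with $g_i \in T_{j-1}$, the tuple $(fg_0, \ldots, fg_n) \in T_{j-1+d}^{\,n+1}$ is a syzygy of $(x_0,\ldots,x_n)$, and correcting the $g_i$ into elements of $K_{j-1}$ amounts to lifting this syzygy through the componentwise surjection $f\colon T_{j-1}^{\,n+1} \twoheadrightarrow T_{j-1+d}^{\,n+1}$ into a syzygy in $T_{j-1}^{\,n+1}$, using the explicit Koszul-plus-Frobenius description of the first syzygies of $(x_0,\ldots,x_n)$ on $T = S/\i n^{[q]}$.
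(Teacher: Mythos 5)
Your approach is the Matlis dual of the paper's. The paper keeps $R/\i m^{[q]} = T/fT$ on the ``socle side'': the snake lemma, applied to the multiplication-by-$(x_0,\ldots,x_n)$ map between the graded pieces of the short exact sequences (4.2), injects $\soc(R/\i m^{[q]})_{i-1}$ into $\soc\bigl(\coker(\phi\colon T(-1)\to T^{n+1})\bigr)_{i-d}$, and the whole theorem is then reduced to Lemma~4.6, which bounds the socle degree of $\coker\phi$ from below by an elementary induction on the number of variables using $(0:x_i)=(x_i^{q-1})$. You instead dualize and work with $K=\ann_T(f)$: the socle condition on $T/fT$ becomes a minimal-generation condition on $K$, and the theorem becomes the assertion that $K$ is generated in the single degree $a-m$. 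Under the Gorenstein pairing these are the same statement --- $\coker\phi$ is Matlis dual to the first syzygy module $\syz_T(x_0,\ldots,x_n)$, and your final diagram (with $K$-columns on the left) is precisely the dual of the paper's three-column diagram. Your preliminary steps, including the computation of $K_j$ and the identification $\soc(R/\i m^{[q]})_e^\vee \cong (K/\i n K)_{a-e}$, are all correct.

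The gap is in your last step, and it is genuine. Applying the snake lemma to your two rows shows that $K_j/\i n K_{j-1}$ is exactly the cokernel of $f\colon (\syz_T\,\i n)_{j-1}\to(\syz_T\,\i n)_{j+d-1}$, so the diagram chase merely reformulates what you want to prove as a lifting problem for syzygies; it supplies no new information. You correctly locate the needed input in the Koszul-plus-Frobenius description of $\syz_T(x_0,\ldots,x_n)$, but lifting individual \emph{Frobenius} generators through $f$ fails on degree grounds: a Frobenius syzygy contributing in degree $j+d-1$ has a $T$-coefficient in degree $j+d-q$, which for $j$ near $a-m$ sits well below the range where $f$ is surjective. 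What does close the gap is the slightly different fact that $\syz_T(x_0,\ldots,x_n)$ is \emph{generated} in degrees $\le q-1$ (degree $1$ for Koszul, degree $q-1$ for Frobenius): for $j+d-1\ge q$ one writes $(fg_0,\ldots,fg_n)=\sum_a x_a\sigma^{(a)}$ with the $\sigma^{(a)}$ one degree lower, lifts the \emph{coefficients} $\sigma^{(a)}_i\in T_{j+d-2}$ through $f$ (possible since $j+d-2$ is in the surjective range), and checks that the resulting error lands in $\i n K_{j-1}$. This generation bound for $\syz_T(x_0,\ldots,x_n)$ is exactly the Matlis dual of the paper's Lemma~4.6, and its proof is the genuinely new content of the section. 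Without some version of it, your proposal does not close.
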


\begin{proof}
We use $\bold x^{[q]}$ to denote the ideal $(x_0, \cdots, x_n)^{[q]}$ of $S$. Let
\[\boldsymbol{\Theta}=\dfrac{S}{\bold x^{[q]}}=\bigoplus_{i\geq 0} \boldsymbol{\Theta}_i\]
and
\[\theta=\dfrac{S}{fS+\bold x^{[q]}}=\bigoplus_{i\geq 0} \theta_i.\]
Since we assume $R$ attains the minimal Hilbert-Kunz function, by the argument contained in the proof of Theorem~\ref{BC} in \cite{BC}, we have short exact sequences
\begin{equation}\label{ses}
0 \to  \boldsymbol{\Theta}_{i-d} \overset{f}{\to} \boldsymbol{\Theta}_i \to \theta_i \to 0
\end{equation}
for every $i \leq m(q)$.

Assume $i \leq m(q)$. Consider the following commutative diagram with exact rows
\begin{equation}
\begin{CD}
0 @>>>\boldsymbol{\Theta}_{i-d-1}  @>f>>  \boldsymbol{\Theta}_{i-1} @>>> \theta_{i-1}@>>>0\\
@. @VV\phi_{i-d}V @VV\phi_iV @VV\psi_iV\\
0 @>>>\boldsymbol{\Theta}_{i-d}^{n+1}  @>\oplus f>>  \boldsymbol{\Theta}_{i}^{n+1} @>>>  \theta_{i}^{n+1}@>>>0
\end{CD},
\end{equation}
\noindent where $\phi_i$ sends every $r \in \boldsymbol{\Theta}_{i-1}$ to $r(x_0,x_1,\cdots, x_n) \in  \boldsymbol{\Theta}^{n+1}_i$ and $\psi_i$ is induced by $\phi_i $ in the natural way. Then the degree $(i-1)$ component of the socle of $\theta$ is just $\ker( \psi_i)$. The map $\phi_i$ is injective since the socle of $ \boldsymbol{\Theta}$ is the one-dimensional vector space over $k$ generated by $x_0^{q-1}\cdots x_n^{q-1}$, an element of degree $> m(q)$. Therefore, we have the injection 
\[\ker{\psi_i} \hookrightarrow (0:\i m)_{\coker (\phi_{i-d})}.\]
While the following lemma guarantees $(0:\i m)_{\coker (\phi_{i-d})}=0$, our theorem follows.
\end{proof}

We fix some notation which are valid for this lemma only. Let $k$ be an arbitrary field in any characteristic. Let $R$ be the standard graded Artinian ring $k[x_0,\cdots, x_n]/(x_0^t, \cdots, x_n^t)$, where $t$ is a fixed positive integer. Let $\i m_R$ denote the maximal ideal $(x_0, \cdots, x_n)$ of $R$.
\begin{lemma} \label{soc}Let $\phi$ be the map from $R$ to $R^{n+1}$ defined by $\phi(r)=r(x_0,x_1,\cdots, x_n)$. Let $M$ be the cokernel of $\phi$. Then the socle degree of $M$ is at least $n(t-1)$, i.e., if $a$ is a nonzero homogeneous element in $(0:\i m_R)_M$, then the degree of $a\geq n(t-1)$. 

\end{lemma}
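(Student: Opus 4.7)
My approach is through graded Matlis duality. Set $s := (n+1)(t-1)$, the top socle degree of the graded Gorenstein Artinian ring $R$. Since $R$ is Gorenstein, its graded $k$-dual $R^\vee := \hom_k(R, k)$ is isomorphic to $R(s)$ as a graded $R$-module; and for any finitely generated graded $R$-module $N$, the Matlis dual $N^\vee := \hom_k(N, k)$ is an exact contravariant functor satisfying the degree-reversing duality identity $\dim_k(\soc N)_d = \dim_k(N^\vee/\i m_R N^\vee)_{-d}$. This turns socle-degree lower bounds into generator-degree upper bounds for $N^\vee$.

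First I would identify $M^\vee$ explicitly. Applying $\hom_k(-,k)$ to the presentation $R(-1) \xrightarrow{\phi} R^{n+1} \to M \to 0$ (where $\phi$ is a degree-zero map and each $e_i \in R^{n+1}$ has internal degree $0$) yields the exact sequence $0 \to M^\vee \to R(s)^{n+1} \xrightarrow{\phi^\vee} R(s+1)$. Since $\phi$ is multiplication by the column vector $(x_0,\ldots,x_n)^{T}$, its dual $\phi^\vee$ is the Koszul map $(b_0,\ldots,b_n) \mapsto \sum_i x_i b_i$. Hence $M^\vee \cong \Sigma(s)$, where $\Sigma := \syz_R(x_0,\ldots,x_n) \subseteq R^{n+1}$ is the first syzygy module of the regular parameters. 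The duality identity then reads
\[
\dim_k(\soc M)_d \;=\; \dim_k\bigl(\Sigma/\i m_R \Sigma\bigr)_{s-d},
\]
so, since $s - (t-1) = n(t-1)$, the lemma reduces to showing that $\Sigma$ is generated as a graded $R$-module in internal degrees at most $t-1$.

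For this generation step, take a homogeneous element $(a_0,\ldots,a_n) \in \Sigma$ and lift it to $(\tilde a_0,\ldots,\tilde a_n) \in S^{n+1}$, where $S := k[x_0,\ldots,x_n]$. Since $\sum_i x_i a_i = 0$ in $R$, the lift satisfies $\sum_i x_i \tilde a_i \in (x_0^t,\ldots,x_n^t)S$, and because $x_0^t,\ldots,x_n^t$ is an $S$-regular sequence we can write $\sum_i x_i \tilde a_i = \sum_i x_i^t \tilde c_i$ for suitable $\tilde c_i \in S$. The tuple $(\tilde a_i - x_i^{t-1}\tilde c_i)_i$ is then an honest syzygy of the regular sequence $x_0,\ldots,x_n$ in $S$, hence an $S$-linear combination of the Koszul relations $x_j e_i - x_i e_j$. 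Reducing modulo $(x_0^t,\ldots,x_n^t)$ expresses $(a_0,\ldots,a_n)$ as an $R$-combination of the annihilator syzygies $x_i^{t-1} e_i$ (internal degree $t-1$) and the Koszul syzygies $x_j e_i - x_i e_j$ (internal degree $1$), as required. The main obstacle is the bookkeeping in the first step, in particular tracking the internal shift $s$ correctly through $R^\vee \cong R(s)$ to $M^\vee \cong \Sigma(s)$; once that is set up, the generation argument is a clean consequence of $S$-regularity.
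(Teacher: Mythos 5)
Your proof is correct, and it takes a genuinely different route from the paper's. The paper's argument is elementary and proceeds by contradiction (framed as an induction on $n$): given a low-degree socle element of $M$ represented by $a=(a_0,\ldots,a_n)\in R^{n+1}$, it extracts the relations $a_j x_i = u_i x_j$, deduces $a_j\in(x_j:\i m_R)$ and hence $a_j\in(x_j)$ by a degree count in $R/x_jR$, normalizes $a$ modulo $\im\phi$ to arrange $a_0=0$, and then shows $a_j x_0$ lands in the one-dimensional top socle of $R$, forcing a degree contradiction. You instead dualize the whole problem in one step: graded Matlis duality over the Gorenstein ring $R$ identifies $M^\vee$ with the twist $\Sigma(s)$ of the syzygy module $\Sigma=\syz_R(x_0,\ldots,x_n)$, and converts the socle-degree lower bound on $M$ into a generator-degree upper bound on $\Sigma$, which you then prove positively by lifting to $S=k[x_0,\ldots,x_n]$, peeling off the $x_i^{t-1}\tilde c_i$ correction to land in the $S$-Koszul syzygies, and thereby exhibiting the explicit generating set $\{x_je_i-x_ie_j\}\cup\{x_i^{t-1}e_i\}$ in degrees $1$ and $t-1$. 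What duality buys you is a clean structural reformulation (a generating set rather than a contradiction) that sidesteps the normalization and case analysis of the elementwise argument; the modest cost is bookkeeping the internal shift $s=(n+1)(t-1)$ through $R^\vee\cong R(s)$. The paper's route, conversely, needs no duality machinery and uses only the single fact $(0:x_i)=(x_i^{t-1})$ in $R$ and in the quotients $R/x_jR$.
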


 \begin{proof} We first point out the following fact, which is easy to verify.
\begin{fact}\label{fact} In $R$, we have $(0:x_i)=(x_i^{t-1}), \forall i$.
\end{fact}

To prove the lemma, we induct on $n$. The case $n=0$ is trivial.
Assume the theorem holds for all Artinian rings of the form $k[x_0,x_1, \cdots, x_l]/({x_0}^t, {x_1}^t, \cdots, {x_l}^t)$ for all $l<n$. Consider the case  $R=k[x_0,x_1, \cdots, x_n]/({x_0}^t, {x_1}^t, \cdots, {x_n}^t)$. We use $R_i$ to denote the degree $i$ component of $R$.
 
Suppose the lemma fails, then there exists an nonzero homogeneous element $\bar a \in  (0:\i m_R)_M$, whose degree is $d < n(t-1)$. Since $M$ is a quotient module of $R^{n+1}$, we can assume $\bar a$ is the image of $a=(a_0, a_1, \cdots, a_n)$ where  $a_0, a_1, \cdots, a_n \in R_d$. The condition $\bar a \in (0:\i m_R)_M$ implies that there exist $u_0, u_1, \cdots, u_n \in R_d$ such that 
\begin{equation}
\label{constrain} ax_i =u_i(x_0, x_1, \cdots, x_n),  0 \leq i \leq n.
\end{equation} 
It follows that for every $j=0,1,\cdots,n$, 
\begin{equation}
\label{constrain2} a_jx_i=u_ix_j, 0 \leq i \leq n.
\end{equation} 
This implies $a_j\i m_R \subseteq (x_j)$, i.e
\begin{equation}\label{ind}
a_j \in (x_j:\i m_R).
\end{equation}
Let $\tilde R$ denote $R/x_jR$, $\tilde a_j$ be the image of $a_j$ in $\tilde R$ and $\i m_{\tilde R}$ the maximal ideal of $\tilde R$. Then (\ref{ind}) becomes
$$\tilde a_j \in (0: \i m_{\tilde R}). $$
Applying Fact \ref{fact} to $\tilde R$, since degree of $a_j < n(t-1)$, we see that $\tilde a_j$ must be the zero in $\tilde R$. This means
\[a_j \in (x_j).\]
Hence, we can assume there exists $a_j' \in R_{d-1}$ such that $a_j=a_j'x_j$, for $j =0,1,\cdots, n$.
Let $a'=a_0'(x_0, \cdots, x_n)$, which is an element in $\im \phi$.  Then $\bar a$ is also the image of $a-a'$. So, we can replace $a$ by $a-a'$ to assume $a$ is of the form $(0, a_1, \cdots, a_n)$  in the first place. Therefore, by (\ref{constrain}), we get
\begin{equation}
u_ix_0=0, 0 \leq i \leq n.
\end{equation}
Multiplying both sides of (\ref{constrain2}) by $x_0$, we then obtain
\begin{equation}
a_jx_ix_0=0,  0 \leq i, j \leq n.
\end{equation} 
Therefore, for  every $j=0,1,\cdots,n$, $a_jx_0 \in (0:\i m_R)$. Thus, by Fact \ref{fact} again, we get
\begin{equation}
a_jx_0=\lambda_jx_0^{t-1}\cdots x_n^{t-1}
\end{equation}
for some $\lambda_j\in k$. This contradicts the fact that $\deg a_j < n(t-1)$.

 \end{proof}

\begin{cor}
\label{formula}

Adopt all of the notation of Theorem~\ref{BC}. Assume $R=S/fS$ attains the minimal Hilbert-Kunz function. Then there exists a constant $c$, such that
\[\lambda (\soc(R/\i m^{[q]})) = cq^{n-2}+O(q^{n-3}).\]
Moreover, the constant $c$ has the following expressions:
\begin{displaymath}
   c = \left\{
     \begin{array}{lr}
       \frac{d((-1)^{n-d}-3)}{2n!}{n \choose 2}  \sum_{i=0}^{\nu-1} (-1)^i {{n+1} \choose i} (\nu-i)^{n-2}, \text{ if } n=2\nu-1 \text{ is odd}\\
       \\
      \frac{d((-1)^{n-d}-3)}{2n!}{n \choose 2}   \sum_{i=0}^{\nu-1} (-1)^i {{n+1} \choose i} (\nu+\frac{1}{2}-i)^{n-2} , \text{ if } n=2\nu \text{ is even}
     \end{array}
   \right.
\end{displaymath} 
\end{cor}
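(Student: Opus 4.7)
The strategy is to reduce the socle length to an explicit alternating binomial sum, then extract the leading $q^{n-2}$ coefficient by combining a polynomial expansion with a cancellation coming from $(n+1)$-th finite differences. Throughout, write $\Theta = S/\mathbf{x}^{[q]}$ and $\theta = R/\i m^{[q]}$ as in the proof of Theorem~\ref{main}.

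First, apply Theorem~\ref{main}: since $R$ attains the minimal Hilbert-Kunz function, the socle of $\theta$ is concentrated in degree $m(q)$, so $\lambda(\soc(\theta)) = \dim_k \theta_{m(q)}$. Using the short exact sequence (\ref{ses}) at $i = m(q)$, this equals $\dim_k \Theta_{m(q)} - \dim_k \Theta_{m(q)-d}$. Expanding the Hilbert series $((1-t^q)/(1-t))^{n+1}$ gives
\[
\dim_k \Theta_N = \sum_{\substack{j \ge 0\\ jq \le N}}(-1)^j \binom{n+1}{j}\binom{N-jq+n}{n}.
\]
Since $m(q)/q \to (n+1)/2$, only $j \in \{0,1,\ldots,\lfloor(n+1)/2\rfloor\}$ contribute. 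Set $\beta_j = (n+1)/2 - j$ and $x_j = m(q)-jq = \beta_j q + \epsilon_j$, where $\epsilon_j$ is a constant depending on $n,d$ and the parity of $(n+1)(q-1)+d-1$. The difference $h(x) := \binom{x+n}{n} - \binom{x-d+n}{n}$ is a polynomial in $x$ of degree $n-1$ with leading coefficient $d/(n-1)!$ and subleading coefficient $d(n+1-d)/(2(n-2)!)$. Substituting $x_j = \beta_j q + \epsilon_j$ and grouping powers of $q$ yields
\[
\dim_k \Theta_{m(q)} - \dim_k \Theta_{m(q)-d} = A_{n-1}\, q^{n-1} + A_{n-2}\, q^{n-2} + O(q^{n-3}),
\]
where $A_{n-1}$ and $A_{n-2}$ are explicit finite sums against $(-1)^j\binom{n+1}{j}$ of $\beta_j^{n-1}$ and $\beta_j^{n-2}$ respectively, weighted by $\epsilon_j$ and the coefficients of $h$.

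The crucial cancellation is that $A_{n-1}=0$. This follows from the identity $\sum_{j=0}^{n+1}(-1)^j\binom{n+1}{j}\beta_j^{n-1}=0$ (the $(n+1)$-th finite difference of a polynomial of degree $n-1$), combined with the symmetry $j \leftrightarrow n+1-j$ sending $\beta_j$ to $-\beta_{n+1-j}$: since $(-1)^{n-1}\beta_j^{n-1} = (-1)^{n-1}(-\beta_{n+1-j})^{n-1}\cdot(-1)^{n-1} = \beta_{n+1-j}^{n-1}$, the truncated sum over $j \le \lfloor(n+1)/2\rfloor$ equals exactly half of the (vanishing) full sum, hence vanishes. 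Reading off $A_{n-2}$ gives the constant $c$: the parity of $(n+1)(q-1)+d-1$ packages into $(-1)^{n-d}$, which together with the contribution of $h$ gives the prefactor $((-1)^{n-d}-3)/(2\cdot n!) \cdot d \binom{n}{2}$, and the sums $\sum(-1)^i\binom{n+1}{i}\beta_i^{n-2}$ appear in the respective parity cases for $n$.

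The main obstacle is the combinatorial bookkeeping of two lower-order contributions. First, one must verify that the cross term $(n-1)\beta_j^{n-2}\epsilon_j$ coming from expanding $x_j^{n-1}$ combines correctly with the subleading coefficient of $h$ to yield precisely the claimed $((-1)^{n-d}-3)$-factor (this amounts to checking that $\epsilon_j + (n+1-d)/2$ simplifies to an integer/half-integer controlled only by the parity of $n-d$). Second, for certain ranges of $d$ there may be boundary indices $j$ where $x_j \ge 0$ but $x_j - d < 0$, so only one of the two binomials is in the polynomial regime; one must check these contribute $O(1)$, hence are absorbed into the error $O(q^{n-3})$ once $n \ge 3$. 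With these verifications the formula for $c$ falls out cleanly from the two cases $n=2\nu-1$ and $n=2\nu$.
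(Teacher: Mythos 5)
Your proposal follows essentially the same path as the paper: reduce via Theorem~\ref{main} and the exact sequence (\ref{ses}) to the coefficient difference $\dim_k \boldsymbol{\Theta}_{m(q)} - \dim_k \boldsymbol{\Theta}_{m(q)-d}$, expand $(1-t^q)^{n+1}/(1-t)^{n+1}$ to write this as an alternating binomial sum, and kill the $q^{n-1}$ term using the vanishing of the $(n+1)$-th finite difference of a degree-$(n-1)$ polynomial together with the $j \leftrightarrow n+1-j$ symmetry — this is exactly the argument the paper carries out in Section~5, with the Stirling-number expansion doing the bookkeeping of the subleading $q^{n-2}$ coefficient that you defer, and Lemma~6.2 supplying the finite-difference/symmetry identity you invoke.
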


\begin{proof} By Theorem~\ref{main}, it remains to calculate
$\dim_k \theta_{m(q)}$, which is equal to $ \dim_k {\boldsymbol{\Theta}}_{m(q)}-
\dim_k {\boldsymbol{\Theta}}_{m(q-d)}$ by (\ref{ses}). Note that $ \dim_k {\boldsymbol{\Theta}}_i$ is the coefficient of $t^i$ in the polynomial $(1+t+t^2+\cdots+t^{q-1})^{n+1}$. The rest of the calculation is completely elementary, which will be carried out in detail in the next section.
\end{proof}

\begin{remark}
Applying Corollary~\ref{formula} to the 3-dimensional hypersurface $k[x,y,z,w]/(xy-zw)$, we see that  $\lambda (\soc(R/\i m^{[q]}))=4q-3$;  to the Cayley's cubic surface $k[x,y,z,w]/(xyz+xyw+xzw+yzw)$, we get $\lambda (\soc(R/\i m^{[q]}))=3q-3$. One might expect that in general, the limit of $\lambda (\soc(R/\i m^{[q]}))/q^{\max\{0, n-2\}}$ (as $q \to \8$) exists. However, the following example in dimension two provides a negative answer for this question. Let $R$ be the coordinate ring of the rational quintic curve in $\mathbb P^3$ parametrized by $(s,t) \to (t^5, st^4, s^4t,s^5)$ in characteristic 2. Our Macaulay 2 experiment gives the following eventually periodic sequence for the lengths of $\soc(R/\i m^{[q]})$:
\begin{center}
\begin{tabular}{|c|c|c|c|c|c|c|c|c|c|c|c|c|c|}
\hline
$q$&1&2&3&4&5&6&7&8&9&10&11&12&...\\
\hline
socle length&5&9&13&21&19&17&17&21&19&17&17&21&...\\
\hline
\end{tabular}
\end{center}
The author thanks Jason McCullough for providing some help on Macaulay 2 programming.
\end{remark}

\begin{remark} This remark is due to Florian Enescu and Yongwei Yao. Let $(R, \i m)$ be a local ring and $S=R[x]_{(\i m, x)}$ with maximal ideal $\i n=(\i m, x)$. Then
$\length (\soc(R/\i m^{[q]}))= \length (\soc(S/\i n^{[q]}))$
for all $q$. We leave the verification of this to the interested readers.
\end{remark}

\section{Two combinatorial identities}\label{cl}
This entire section is elementary. We prove a combinatorial result Theorem~\ref{elementary}, which contains the calculation we mentioned in the proof of Corollary~\ref{formula} as a special case. 

Fix an integer $n>0$, consider the following function
\begin{equation}\label{genfun}
\sum_{i\geq 0} \GG (i)t^i =(1+t+t^2+\cdots+t^{q-1})^{n+1}.
\end{equation} 
Assume  $m(q)$ is an integer-valued function of the following form
\[m(q)=\bigg(\dfrac{n+1}{2}\bigg )q+\xi + \epsilon\]
where $\xi$ is a constant and $\epsilon$ is defined according to
\begin{displaymath}
   \epsilon = \left\{
     \begin{array}{lr}
       0,& \text{ if  $n$ is odd,}\\
    (1/2)(q-2\lfloor{q/2}\rfloor),& \text{ if  $n$ is even.}
     \end{array}
   \right.
\end{displaymath} 
For any fixed integer $d>0$ define
\begin{equation}
h(q)=\GG \bigg(m(q) \bigg)-\GG \bigg(m(q)-d \bigg).
\end{equation}
Then,  we have the following estimate about $h(q)$:
\begin{theorem}\label{elementary}
There exists a constant $c$, such that
 \[h(q)=cq^{n-2}+O(q^{n-3}).\]
Moreover, the constant $c$ has the following expressions
\begin{displaymath}
   c = \left\{
     \begin{array}{lr}
       \dfrac{d}{n!}{n \choose 2} (2\xi+2\epsilon+n-d+1) \sum_{i=0}^{\nu-1} (-1)^i {{n+1} \choose i} (\nu-i)^{n-2},& \text{ if } n=2\nu-1\\
      \dfrac{d}{n!}{n \choose 2} (2\xi+2\epsilon+n-d+1) \sum_{i=0}^{\nu-1} (-1)^i {{n+1} \choose i} \bigg (\nu+\dfrac{1}{2}-i\bigg)^{n-2} ,& \text{ if } n=2\nu
     \end{array}
   \right.
\end{displaymath} 
\end{theorem}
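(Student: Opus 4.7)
The plan is to compute $h(q)$ by expanding $\GG$ via the generating-function identity (\ref{genfun}) and then extracting the coefficient of $q^{n-2}$ from an explicit polynomial expression. From the factorization $(1-t^q)^{n+1}/(1-t)^{n+1}$, the binomial theorem on the numerator combined with $(1-t)^{-(n+1)}=\sum_{k\ge 0}\binom{n+k}{n}t^k$ yields
\[
\GG(m)=\sum_{i\ge 0,\,iq\le m}(-1)^i\binom{n+1}{i}\binom{n+m-iq}{n}.
\]
Writing $P(x):=(x+1)(x+2)\cdots(x+n)=n!\binom{x+n}{n}$ and $x_i:=m(q)-iq$, this becomes
\[
h(q)=\frac{1}{n!}\sum_{i}(-1)^i\binom{n+1}{i}\bigl[P(x_i)-P(x_i-d)\bigr],
\]
where the sum runs over those $i$ with $iq\le m(q)$; for large $q$ this is $0\le i\le \nu$.

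Next, I would Taylor-expand $P(x-d)=P(x)-dP'(x)+\tfrac{d^{2}}{2}P''(x)-\cdots$ and combine with $P(x)=x^n+\binom{n+1}{2}x^{n-1}+\cdots$ to extract the two leading coefficients
\[
P(x)-P(x-d)=nd\,x^{n-1}+\binom{n}{2}d(n+1-d)\,x^{n-2}+O(x^{n-3}),
\]
where the $x^{n-2}$-coefficient comes from the identity $(n-1)\binom{n+1}{2}d-\tfrac{1}{2}n(n-1)d^{2}=\binom{n}{2}d(n+1-d)$ after collecting contributions from $dP'$ and $-\tfrac{d^{2}}{2}P''$.

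Then, letting $\alpha_n=0$ when $n=2\nu-1$ is odd and $\alpha_n=\tfrac{1}{2}$ when $n=2\nu$ is even, I substitute $x_i=(\nu+\alpha_n-i)q+(\xi+\epsilon)$ and read off the coefficient of $q^{n-2}$ in $P(x_i)-P(x_i-d)$. The $q^{n-2}$-coefficient of $x_i^{n-1}$ is $(n-1)(\nu+\alpha_n-i)^{n-2}(\xi+\epsilon)$, and that of $x_i^{n-2}$ is $(\nu+\alpha_n-i)^{n-2}$; combining these using $n(n-1)=2\binom{n}{2}$ produces
\[
d\binom{n}{2}(2\xi+2\epsilon+n+1-d)(\nu+\alpha_n-i)^{n-2}.
\]

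Finally, I would multiply by $(-1)^i\binom{n+1}{i}/n!$ and sum over $i=0,1,\dots,\nu$. For odd $n\ge 3$ the $i=\nu$ term carries the factor $(\nu-\nu)^{n-2}=0$, so the range may be truncated at $\nu-1$, recovering the first displayed formula for $c$; the even case yields the second formula similarly. The main obstacles are twofold: verifying that the Taylor remainder $O(x^{n-3})$ after substitution is genuinely $O(q^{n-3})$ (immediate since each $x_i$ is linear in $q$ with bounded lower-order part), and confirming that indices $i$ with $iq>m(q)$ contribute nothing, so truncating the sum at $i=\nu$ is legitimate. Once these routine checks are carried out, the two parity-case closed forms for $c$ emerge directly from the sum.
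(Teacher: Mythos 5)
The proposal follows the same overall strategy as the paper: expand $\GG$ from the Hilbert series $\frac{(1-t^q)^{n+1}}{(1-t)^{n+1}}$, express $h(q)$ as a sum of differences of binomial coefficients, and read off the leading terms of the resulting polynomial in $q$. Writing $P(x)=n!\binom{x+n}{n}$ and Taylor-expanding $P(x)-P(x-d)$ is a clean equivalent of the paper's expansion via Stirling numbers of the first kind, and the resulting $q^{n-2}$-coefficient is computed correctly.

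However, there is a genuine gap. The assertion $h(q)=cq^{n-2}+O(q^{n-3})$ requires that the coefficient of $q^{n-1}$ in $h(q)$ vanishes; otherwise the leading term would be of order $q^{n-1}$. From your own expansion, that coefficient is
\[
\frac{nd}{n!}\sum_{i}(-1)^i\binom{n+1}{i}\bigl(\nu+\alpha_n-i\bigr)^{n-1},
\]
and you never address why this sum is zero. This is not automatic; it is precisely the content of the two combinatorial identities the paper isolates as a separate lemma, namely
\[
\sum_{i=0}^{\nu}(-1)^i\binom{2\nu}{i}(\nu-i)^{2\nu-2}=0
\qquad\text{and}\qquad
\sum_{i=0}^{\nu}(-1)^i\binom{2\nu+1}{i}\Bigl(\nu-i+\tfrac12\Bigr)^{2\nu-1}=0,
\]
which the paper proves by observing that the $(2\nu)$-th (resp.\ $(2\nu+1)$-th) forward difference of a polynomial of degree $2\nu-2$ (resp.\ $2\nu-1$) vanishes identically. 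Without invoking or proving such an identity, your argument only shows $h(q)$ is a polynomial of degree $\le n-1$ in $q$, not that it begins at $q^{n-2}$.

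A secondary point worth tightening: you truncate the even case ``similarly'' to the odd case, but when $n=2\nu$ the term $i=\nu$ carries the nonzero factor $(\tfrac12)^{n-2}$ rather than $(\nu-\nu)^{n-2}=0$, so the $i=\nu$ term cannot be discarded on the grounds you give for odd $n$; you should either retain it explicitly in the final sum or explain why it disappears. Apart from these issues, the mechanics of the computation are sound and parallel the paper's proof.
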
 
\begin{proof}
By binomial theorem, we have
\begin{align}
&(1+t+t^2+\cdots+t^{q-1})^{n+1}
=\dfrac{(1-t^q)^{n+1}}{(1-t)^{n+1}} \notag\\
=&\bigg(\sum_{i=0}^{n+1}(-1)^i{{n+1} \choose i}t^{qi}\bigg) \bigg(\sum_{i=0}^{\infty}{{n+i} \choose n}t^i \bigg )\label{Hilbert} 
\end{align}
\noindent {\bf Case 1. }   $n=2\nu-1$. 
Then (we leave $\epsilon$ here for the purpose of Case 2, even though it is 0 here)
\[m(q)=\nu q +\xi+\epsilon.\]
By comparing the coefficients of $t^{m(q)}$ in (\ref{genfun}) and (\ref{Hilbert}), we get
\begin{equation}
\GG \bigg(m(q) \bigg)=\sum_{i=0}^{\nu-1}(-1)^i
{{n+1} \choose i}
{{(\nu-i)q+\xi+\epsilon+n} \choose n}
\end{equation}
and
\begin{equation}
\GG \bigg(m(q)-d \bigg)=\sum_{i=0}^{\nu-1}(-1)^i
{{n+1} \choose i}
{{(\nu-i)q+\xi+\epsilon+n-d} \choose n}.
\end{equation}
Let
\begin{equation}\label{GQ}
g(q)={{(\nu-i)q+\xi+\epsilon+n} \choose n}-{{(\nu-i)q+\xi+\epsilon+n-d} \choose n},
\end{equation}
then, one could rewrite $h(q)$ as
\begin{equation}
h(q)=\sum_{i=0}^{\nu-1}(-1)^i
{{n+1} \choose i}
g(g).
\end{equation}
To estimate $g(q)$, we use \textit{Stirling numbers of the first kind} $s(n,k)$ to expand $g(q)$ as a polynomial of $(\nu-i)q$. Recall  by definition,  $s(n,k)$ is the coefficient of 
$x^k$ in the polynomial $x(x-1)\cdots (x-n+1)$, i.e,
\begin{equation}\label{Stirling}
x(x-1)\cdots (x-n+1)=\sum_{k=0}^n s(n,k)x^k.
\end{equation}
Therefore, for any integer $Z$, we have
\begin{align*}
&n!{(\nu-i)q+Z \choose n}\\
=&\sum_{k=0}^n s(n,k)\bigg ((\nu-i)q+Z\bigg)^k\\
=&\sum_{k=0}^n s(n,k)\bigg (\sum_{j=0}^k{k \choose j}((\nu-i)^jq^jZ^{k-j})\bigg )\\
=&(\nu-i)^nq^n+\bigg( {n \choose 1}((v-i)q)^{n-1}Z-{n \choose 2}((v-i)q)^{n-1} \bigg)\\
\hspace{1cm}&+\bigg({n \choose 2} ((v-i)q)^{n-2}Z^2 -{n \choose 2}{n-1 \choose 1}((v-i)q)^{n-2}Z +s(n,n-2)((v-i)q)^{n-2} \bigg)+o(q^{n-3}).
\end{align*}
Here in the last equality, we use the fact $s(n,n)=1$ and $s(n,n-1)=-{n \choose 2}$.
It follows that
\begin{align*}
n!g(q)&=d((v-i)q)^{n-1}{n \choose 1}+d((v-i)q)^{n-2}\bigg({n \choose 2}  (2\xi+2\epsilon+2n-d)-{n \choose 2}{n-1 \choose 1} \bigg)+o(q^{n-3})\\
&=d((v-i)q)^{n-1}{n \choose 1}+d((v-i)q)^{n-2}{n \choose 2}  (2\xi+2\epsilon+n-d+1)+o(q^{n-3}).
\end{align*}
 Therefore
 \begin{align*}
 h(q)=&q^{n-1}\bigg( \dfrac{d}{n!}{n \choose 1}\sum_{i=0}^{\nu-1} (-1)^i {{n+1} \choose i} (\nu-i)^{n-1} \bigg)+\\
 &q^{n-2}\bigg(\dfrac{d}{n!}{n \choose 2} (2\xi+2\epsilon+n-d+1) \sum_{i=0}^{\nu-1} (-1)^i {{n+1} \choose i} (\nu-i)^{n-2}       \bigg)+o(q^{n-3}).
 \end{align*} 
 Hence, by (\ref{C1}), the coefficient of $q^{n-1}$ in $h(q)$ is 0. Moreover, 
 \[c=\dfrac{d}{n!}{n \choose 2} (2\xi+2\epsilon+n-d+1) \sum_{i=0}^{\nu-1} (-1)^i {{n+1} \choose i} (\nu-i)^{n-2}. \]
 
\noindent {\bf Case 2. }   $n=2\nu$. 
In this case,
\[m(q)=\bigg (\nu+\frac{1}{2} \bigg)q+\xi+\epsilon.\]
Exactly the same computations yield
\begin{align*}
 h(q)=&q^{n-1}\bigg( \dfrac{d}{n!}{n \choose 1}\sum_{i=0}^{\nu-1} (-1)^i {{n+1} \choose i} (\nu+\dfrac{1}{2}-i)^{n-1} \bigg)+\\
 &q^{n-2}\bigg(\dfrac{d}{n!}{n \choose 2} (2\xi+2\epsilon+n-d+1) \sum_{i=0}^{\nu-1} (-1)^i {{n+1} \choose i} (\nu+\dfrac{1}{2}-i)^{n-2}       \bigg)+o(q^{n-3}).
 \end{align*} 
 So  by (\ref{C2}), the coefficient of $q^{n-1}$ in $h(q)$ is 0 and
  \[c=\dfrac{d}{n!}{n \choose 2} (2\xi+2\epsilon+n-d+1) \sum_{i=0}^{\nu-1} (-1)^i {{n+1} \choose i} \bigg (\nu+\dfrac{1}{2}-i\bigg)^{n-2}. \]
\end{proof}

\begin{lemma} For any positive integer $n$, the following identities hold:
\begin{equation}\label{C1}
\sum_{i=0}^n (-1)^i {{2n} \choose i} (n-i)^{2n-2}=0
\end{equation}
and
\begin{equation}\label{C2}
\sum_{i=0}^n (-1)^i {{2n+1} \choose i} \bigg (n-i+\frac{1}{2} \bigg)^{2n-1}=0.
\end{equation}
\end{lemma}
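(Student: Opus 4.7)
The plan is to prove both identities by a single uniform strategy: combine the classical finite-difference vanishing theorem with a simple reflection symmetry of the summand. Recall the standard fact that for any polynomial $P(x)$ of degree strictly less than $N$,
\[
\sum_{i=0}^{N} (-1)^i \binom{N}{i} P(i) \;=\; 0,
\]
since this sum equals $(-1)^N \Delta^N P(0)$, and the $N$-th forward difference annihilates polynomials of degree less than $N$.

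For (\ref{C1}), I will apply this with $N = 2n$ and $P(i) = (n-i)^{2n-2}$, a polynomial in $i$ of degree $2n-2 < 2n$. This immediately gives $\sum_{i=0}^{2n}(-1)^i\binom{2n}{i}(n-i)^{2n-2} = 0$. I then want to exploit the involution $i \leftrightarrow 2n-i$: under it, $\binom{2n}{i}$ is invariant; $(-1)^i$ is invariant because $2n$ is even; and $(n-i)^{2n-2}$ is invariant because it becomes $(-(n-i))^{2n-2}$ and the exponent $2n-2$ is even. Hence the $2n+1$ summands split as twice the sub-sum $\sum_{i=0}^{n-1}$ plus the middle term at $i=n$, which is $(-1)^n \binom{2n}{n}\cdot 0^{2n-2}=0$ for $n \ge 2$. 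Dividing by $2$ yields $\sum_{i=0}^{n-1}(\cdots) = 0$, and adjoining the zero middle term gives the identity as stated.

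For (\ref{C2}), the same template works with $N=2n+1$ applied to $P(i) = (n-i+\tfrac{1}{2})^{2n-1}$ of degree $2n-1 < 2n+1$, so $\sum_{i=0}^{2n+1}(-1)^i\binom{2n+1}{i}(n-i+\tfrac{1}{2})^{2n-1} = 0$. Now I will check the involution $i \leftrightarrow 2n+1-i$: $\binom{2n+1}{i}$ is invariant; $(-1)^i$ picks up $(-1)^{2n+1} = -1$ because $2n+1$ is odd; and $(n-i+\tfrac{1}{2}) \leftrightarrow -(n-i+\tfrac{1}{2})$, so after raising to the odd power $2n-1$ we pick up another factor of $-1$. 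The two signs cancel, the summand is again invariant, and the full sum splits cleanly as twice the partial sum $\sum_{i=0}^n$, which must therefore vanish.

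The main point to be careful about is the parity bookkeeping in the symmetry step: one needs all three factors $\binom{N}{i}$, $(-1)^i$, and the power $(N/2 - i)^\ast$ to transform compatibly so that the summand is genuinely invariant under the relevant reflection. The first identity uses $N$ even and the exponent even, while the second uses $N$ odd and the exponent odd; in both cases the two sign flips (or lack thereof) conspire correctly. Once that is verified, the rest is mechanical.
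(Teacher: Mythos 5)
Your proof is correct and uses the same two ingredients as the paper's proof (the $N$-th forward difference annihilating polynomials of degree $< N$, plus the reflection symmetry $i \leftrightarrow N-i$ to relate the half-range sum to the full-range sum), merely applied in the opposite order: the paper first doubles the sum via symmetry and then invokes the difference identity, while you invoke the difference identity first and then use symmetry to split off the half-sum.
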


\begin{proof}
We only prove (\ref{C1}). The proof of (\ref{C2}) is similar. The following elementary proof of (\ref{C1}) is suggested by Daniel Smith-Tone to the author. First we notice that 
\begin{align*}
2\sum_{i=0}^n (-1)^i {{2n} \choose i} (n-i)^{2n-2}&=\sum_{i=0}^n (-1)^i {{2n} \choose i} (n-i)^{2n-2}+\sum_{j=n+1}^{2n} (-1)^j {{2n} \choose j} (n-j)^{2n-2}\\
&=\sum_{i=0}^{2n} (-1)^i {{2n} \choose i} (n-i)^{2n-2}.
\end{align*}

For any function $f(x)$,  one defines $\bigtriangleup f(x)$, the \textit{forward difference} of $f(x)$, to be the function $f(x+1)-f(x)$. The higher order forward difference is defined recursively by $\bigtriangleup^nf(x)=\bigtriangleup^{n-1}(\bigtriangleup f(x))$. It is then easy to check that for any positive integer $k$, 
$$\bigtriangleup^k f(x)=\sum_{i=0}^k (-1)^i{ k \choose i}f(x+k-i). $$
Apply the above to $f(x)=x^{2n-2}$. Since $f(x)$ is a polynomial of degree $2n-2$, $\bigtriangleup^{2n}f(x)$ must be zero, i.e.
$$\sum_{i=0}^{2n} (-1)^i{ 2n \choose i}(x+2n-i)^{2n-2}=0.$$
In particular, we can take $x=-n$ and the identity is proved.
\end{proof}

\section{Socle length and Betti number}\label{betti}
 In this section, we provide some connections between the socle length function we considered in Section~\ref{sl} and the asymptotic growth of some other invariants in characteristic $p$, such as Betti numbers. The main result of this section is 
 \begin{theorem}\label{same}
 Let $(R, \i m, k)$ be a local ring in characteristic $p$. Let $I$ and $\i a$ be  $\i m$-primary ideals. Suppose $I=J+uR$ for some $u \in R$, and $J$ is an ideal of finite projective dimension which satisfies the condition that $R/J^{[q]}$ is Artinian Gorenstein for $q \gg0$. Then the differences between any of the following two numerical functions (as functions on $q$) are bounded as $q \to \8$:
\begin{itemize}
\item[(1)] $\lambda (\hom(R/\i a, R/I^{[q]}))$;
\item[(2)] $\lambda (\tor_1(R/I, {}^{f^n}\!\! R)\otimes R/ \i a)$;
\item[(3)] $\lambda (\tor_2(R/I^{[q]}),R/ \i a)$.
\end{itemize} 
\end{theorem}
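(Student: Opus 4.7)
Set $A_q := R/J^{[q]}$, which by hypothesis is Artinian Gorenstein for $q \gg 0$, and let $\bar M_q := (J^{[q]}:_R u^q)/J^{[q]} = (0:_{A_q} u^q) \subseteq A_q$. My plan is to show each of $(1), (2), (3)$ differs by a uniform constant from the single quantity
\[
X_q := \lambda(\bar M_q \otimes_R R/\i a) = \lambda(\bar M_q/\i a \bar M_q),
\]
whence the pairwise bounded differences follow from the triangle inequality. The identification $(1) = X_q$ is essentially Matlis duality: the functor $(-)^\vee = \hom_{A_q}(-, A_q)$ is an exact anti-equivalence on finite-length $A_q$-modules (as $A_q$ is self-injective), and it sends $R/I^{[q]} = A_q/u^q A_q$ to $(0:_{A_q} u^q) = \bar M_q$. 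Rewriting $\hom_R(R/\i a, R/I^{[q]}) = \hom_{A_q}(A_q/\i a A_q, R/I^{[q]})$, applying $(-)^\vee$, and using the Hom--tensor adjunction together with $(0:_{A_q}\i a) = \hom_{A_q}(A_q/\i a A_q, A_q)$ identifies $(1)$ with the length of the Matlis dual of $\bar M_q/\i a \bar M_q$, namely $X_q$.

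For the comparison of $(1)$ with $(2)$, I would apply the Frobenius functor $F^n$ to the four-term exact sequence $0 \to (J:_R u)/J \to R/J \xrightarrow{u} R/J \to R/I \to 0$ and invoke Peskine--Szpiro's vanishing $\tor_i^R(R/J, {}^{f^n}R) = 0$ for $i \ge 1$ (valid since $\projdim_R R/J < \8$) to identify $\tor_1^R(R/I, {}^{f^n}R)$ with the cokernel of the natural inclusion
\[
P_q := (J:_R u)^{[q]}/J^{[q]} \hookrightarrow \bar M_q.
\]
Tensoring the resulting short exact sequence $0 \to P_q \to \bar M_q \to \tor_1^R(R/I, {}^{f^n}R) \to 0$ with $R/\i a$ yields $0 \le X_q - (2) \le \lambda(P_q \otimes_R R/\i a)$. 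Since $P_q$ is generated over $R$ by the $q$-th powers of a fixed generating set of $(J:_R u)$, we have $\mu_R(P_q) \le \mu_R(J:_R u)$, so $\lambda(P_q \otimes R/\i a) \le \mu_R(J:_R u) \cdot \lambda(R/\i a)$ is a constant.

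For the comparison of $(1)$ with $(3)$, I would invoke the Cartan--Eilenberg change-of-rings spectral sequence
\[
E^2_{i,j} = \tor_i^{A_q}(R/I^{[q]}, T_j) \;\Longrightarrow\; \tor_{i+j}^R(R/I^{[q]}, R/\i a), \qquad T_j := \tor_j^R(A_q, R/\i a),
\]
whose convergence is guaranteed by $\projdim_R A_q = \projdim_R R/J < \8$, independent of $q$. The minimal $R$-resolution of $A_q$ is the $F^n$-pullback of a fixed resolution of $R/J$ (again Peskine--Szpiro), so its matrix entries lie in $\i m^{[q]} \subseteq \i a$ for $q \gg 0$; thus $T_j$ is a free $R/\i a$-module of rank $\beta_j^R(R/J)$ and hence of bounded length. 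Using the dimension shifts
\[
\tor_2^{A_q}(R/I^{[q]}, N) \cong \tor_1^{A_q}(u^q A_q, N) \cong \ker(\bar M_q \otimes_{A_q} N \to N)
\]
obtained from the sequences $0 \to u^q A_q \to A_q \to R/I^{[q]} \to 0$ and $0 \to \bar M_q \to A_q \to u^q A_q \to 0$, and specializing to $N = A_q/\i a A_q$, one identifies $E^2_{2,0}$ with $\ker(\bar M_q/\i a \bar M_q \to A_q/\i a A_q)$, whose image has length at most $\lambda(R/(J^{[q]}+\i a)) \le \lambda(R/\i a)$. This gives $|E^2_{2,0} - X_q| \le \lambda(R/\i a)$. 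The other graded pieces $E^\infty_{0,2}$ and $E^\infty_{1,1}$ of the total-degree-$2$ filtration of $(3)$ have lengths bounded by $\lambda(T_2)$ and $\lambda(T_1)$ respectively, and the only possibly nonzero differential touching $E^2_{2,0}$ is $d_2 : E^2_{2,0} \to E^2_{0,1}$, whose target has length $\le \lambda(T_1)$; summing the $E^\infty$-diagonal produces the desired bound on $|(3) - X_q|$, and hence on $|(3)-(1)|$.

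The main obstacle is really Step~1's Matlis-duality identification $(1) = X_q$, which requires the Gorenstein Artinian hypothesis on $A_q$ and reduces everything to comparisons involving the single module $\bar M_q$; without it, there would be no common ``principal part'' linking the three invariants. A secondary technical challenge is establishing uniform-in-$q$ length control on $T_j = \tor_j^R(R/J^{[q]}, R/\i a)$ in Step~3, which crucially uses the finite projective dimension of $J$ via Peskine--Szpiro.
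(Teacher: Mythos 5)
Your Step 1 (Matlis duality identifying $(1)$ with $X_q := \lambda(\bar M_q/\i a\bar M_q)$) and Step 2 (applying $F^n$ to $0\to R/(J{:}u)\to R/J\to R/I\to 0$, invoking Peskine--Szpiro to identify $\tor_1(R/I,{}^{f^n}\!R)$ with $\bar M_q/P_q$, and bounding $\lambda(P_q\otimes R/\i a)$ via Nakayama) match the paper's argument essentially line-for-line, differing only in cosmetic phrasing.

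Step 3 is where you take a genuinely different route, and it works. The paper compares $(3)$ with $(2)$ via the spectral sequence of the double complex $F^n(F_\dt)\otimes G_\dt$ (with $F_\dt, G_\dt$ minimal $R$-free resolutions of $R/I$ and $R/\i a$), i.e., $\tor_i(\tor_j(R/I,{}^{f^n}\!R),R/\i a)\Rightarrow H_{i+j}(F^n(F_\dt)\otimes R/\i a)$, and then reads off the five-term exact sequence of low degree, together with an argument that the edge map $\varsigma$ is an isomorphism once $\i m^{[q]}\subseteq\i a$, to get $|(3)-(2)|\le\rank(F_2)\lambda(R/\i a)$. You instead compare $(3)$ directly with $X_q\approx(1)$ by running the Cartan--Eilenberg change-of-rings spectral sequence over $R\to A_q$, $\tor_p^{A_q}(R/I^{[q]},\tor_q^R(A_q,R/\i a))\Rightarrow\tor_{p+q}^R(R/I^{[q]},R/\i a)$. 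The advantages of your approach: the bounded-column hypothesis ($\projdim_R A_q<\infty$) is transparent, the identification $E^2_{2,0}\cong\ker(\bar M_q/\i a\bar M_q\to A_q/\i a A_q)$ via the two dimension-shifts $0\to u^qA_q\to A_q\to R/I^{[q]}\to 0$ and $0\to\bar M_q\to A_q\to u^qA_q\to 0$ is clean, and you avoid having to argue that a particular edge map vanishes. The cost is that you must control three $E^\8$-terms rather than just two low-degree terms. One small step you leave implicit and should spell out: the bound $\lambda(E^2_{1,1})\le\lambda(T_1)$ (hence $\lambda(E^\8_{1,1})\le\lambda(T_1)$) follows because $R/I^{[q]}$ is cyclic over $A_q$ with cyclic first syzygy $u^qA_q$, so $\tor_1^{A_q}(R/I^{[q]},N)\hookrightarrow u^qA_q\otimes_{A_q}N$, a cyclic quotient of $N$; similarly for $E^2_{0,2}=T_2/u^qT_2$. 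With that filled in, the argument is complete.
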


\begin{proof}
We first show for a given $R$-module $M$, 
\begin{equation}\label{fact1}
\length (\hom (R/ \i a, M)) = \length( \hom(M,E) \otimes R/ \i a),
\end{equation}
where $E=E(k)$ is the injective hull of $k$.
To see this, suppose $\i a=(a_1, \cdots, a_c)$ and consider the following exact sequence
\begin{equation}\label{amap}
0 \to \hom (R/ \i a, M) \to M \to \bigoplus_1^c M,
\end{equation}
where the rightmost map sends every $\tau \in M$ to $(a_1\tau, \cdots, a_c \tau) \in \bigoplus_1^c M$.
Then the equality (\ref{fact1}) follows easily from taking the Matlis dual of (\ref{amap}).

Applying(\ref{fact1}) to the case $M=R/I^{[q]}$ (take $q$ large enough such that $ J^{[q] }\subseteq \i a$),
since $R/J^{[q]}$ is Gorenstein, $E=R/J^{[q]}$, we obtain
\begin{equation*}
\length(\hom(R/\i a,R/I^{[q]}))=\length(\hom (R/I^{[q]}, R/J^{[q]}) \otimes R/\i a).
\end{equation*}
Observe that
\begin{equation*}
\hom (R/I^{[q]}, R/J^{[q]}) \cong \dfrac{J^{[q]}:I^{[q]}}{J^{[q]}}=\dfrac{J^{[q]}:u^q}{J^{[q]}}.
\end{equation*}
The last equality here is due to our assumption $I=J+uR$. We therefore have
\begin{equation}\label{twolength}
\length(\hom(R/\i a,R/I^{[q]}))=\length \bigg(\dfrac{J^{[q]}:u^q}{J^{[q]}}  \otimes R/\i a \bigg).
\end{equation}
Again, since $I=J+uR$, we have a short exact sequence
\begin{equation}\label{basicses}
0 \to \dfrac{R}{J:u} \to \dfrac{R}{J} \to \dfrac{R}{I} \to 0.
\end{equation}
Tensoring (\ref{basicses}) with $ {}^{f^n}\!\! R$. Notice that $\tor_1(R/J,  {}^{f^n}\!\! R)=0$ since $J$ has finite projective dimension,
we see there exists an exact sequence
\begin{equation}\label{torseq}
0 \to \tor_1(R/I, {}^{f^n}\!\! R) \to \dfrac{R}{(J:u)^{[q]}} \to \dfrac{R}{J^{[q]}} \to \dfrac{R}{I^{[q]}}\to 0.
\end{equation}
Comparing the exact sequence (\ref{torseq}) with the short exact sequence
\[0 \to \dfrac{R}{J^{[q]}:u^q} \to \dfrac{R}{J^{[q]}} \to \dfrac{R}{I^{[q]}} \to 0,\]
which is obtained in a way similar to (\ref{basicses}),
we have
\[\tor_1(R/I, {}^{f^n}\!\! R) \cong \dfrac{J^{[q]}:u^q}{(J:u)^{[q]}}. \]
Hence we obtain a short exact sequence
\begin{equation}\label{firstiso}
0 \to \dfrac{(J:u)^{[q]}}{J^{[q]}} \to \dfrac{J^{[q]}:u^q}{J^{[q]}} \to\tor_1(R/I, {}^{f^n}\!\! R)  \to 0.
\end{equation}
Tensoring (\ref{firstiso}) with $R/ \i a$ gives rise to an exact sequence
\[\to \dfrac{(J:u)^{[q]}}{J^{[q]}} \otimes R/ \i a \to \dfrac{J^{[q]}:u^q}{J^{[q]}} \otimes R/ \i a \to \tor_1(R/I, {}^{f^n}\!\! R)\otimes R/ \i a  \to 0.\]
It follows that 
\[0 \leq \length \bigg(\dfrac{J^{[q]}:u^q}{J^{[q]}}  \otimes R/\i a \bigg)-\length (\tor_1(R/I, {}^{f^n}\!\! R)\otimes  R/ \i a) \leq \length \bigg(\dfrac{(J:u)^{[q]}}{J^{[q]}} \otimes R/ \i a\bigg). \]
Thus from (\ref{twolength}), we conclude that
\[0 \leq \length (\hom(R/ \i a, F^n(R/I)))-\length (\tor_1(R/I, {}^{f^n}\!\! R)\otimes  R/ \i a) \leq \length \bigg(\dfrac{(J:u)^{[q]}}{J^{[q]}} \otimes R/ \i a\bigg).\]
But the right hand side is 
\[ \leq \length \bigg(\dfrac{(J:u)^{[q]}}{J^{[q]}} \otimes k\bigg) \length(R/ \i a)\leq \length \bigg(\dfrac{(J:u)}{J} \otimes k\bigg) \length(R/ \i a) =O(1).\]
Therefore, the difference between (1) and (2) is bounded as $q \to \8$.

To establish the boundedness of the difference between (2) and (3), we use some spectral sequence arguments.
Let $F_\bullet$ be the minimal free resolution of $R/I$ and $G_\bullet$ the minimal free resolution of $R/\i a$.  The double complex $F^n(F_\bullet) \otimes G_\bullet$ yields the following spectral sequence
\[\tor_i (\tor_j(R/I, {}^{f^n}\!\! R), R/\i a) \Rightarrow H_{i+j}(F^n(F_\bullet) \otimes R/ \i a).\]
From the exact sequence of low degree terms, we get the following exact sequence
\[H_2(F^n(F_\bullet) \otimes R/ \i a) \to \tor_2(F^n(R/I),R/ \i a) \to \tor_1(R/I, {}^{f^n}\!\! R)\otimes R/ \i a  \overset{0}{\to} H_1(F^n(F_\bullet) \otimes R/ \i a)  \to\]
\[ \overset{\varsigma}{\to}  \tor_1(F^n(R/I),R/ \i a) \to 0.\]
In this exact sequence, we choose $n \gg 0$ so that $\i m^{[q]} \subseteq \i a$, which forces the map $\varsigma$ to be an isomorphism. It then follows that
\begin{equation}\label{lowdegree}
0 \leq \length(\tor_2(F^n(R/I),R/ \i a)) - \length (\tor_1(R/I, {}^{f^n}\!\! R)\otimes R/ \i a) \leq \length(H_2(F^n(F_\bullet) \otimes R/ \i a)).
\end{equation}
Since $\i m^{[q]} \subseteq \i a$, we also have
\[H_{i}(F^n(F_\bullet) \otimes R/ \i a) =\bigoplus_1^{\rank F_i} R/ \i a .\]
So the right hand side of (\ref{lowdegree}) equals $(\rank F_2)\length(R/\i a)$, which is independent of $q$.
\end{proof}
 
 We point out here that in Theorem~\ref{same}, if we take $\i a$ to be the maximal ideal $\i m$, then (1) gives us the socle length function we considered in Section~\ref{sl} and (3) gives the second Betti numbers of $R/I^{[q]}$. 

\specialsection*{ACKNOWLEDGEMENTS}
We thank the referee's many suggestions which make the presentation of this manuscript a lot better than before. 




\begin{thebibliography}{99}

\bibitem{BC} 
R.-O.\ Buchweitz, Q.\ Chen, 
\textit{Hilbert-Kunz functions of cubic curves and surfaces}, 
J.\ Algebra {\bf 197} (1997), 246--267.

\bibitem{B1} 
H.\ Brenner, 
\textit{A linear bound for Frobenius powers and an inclusion bound for tight closure}, 
Michigan Math.\ J. {\bf 53} (2005), 585--596.

\bibitem{B5} 
H.\ Brenner, 
\textit{On a problem of Miyaoka}, 
Number fields and function fields: Two parallel worlds (B. Moonen, R. Schoof, G. van derGeer, eds.), Progr. Math. {\bf 239} (2005), 51--59.

\bibitem{B4} 
H.\ Brenner, 
\textit{There is no Bogomolov type restriction theorem for strong semistability in positive characteristic}, Proc. Amer. Math. Soc. {\bf 133}, (2005),1941-1947.


\bibitem{B2} 
H.\ Brenner, 
\textit{The rationality of the Hilbert-Kunz multiplicity in graded dimension two},
Math.\ Ann. {\bf 334} (2006), 91--110.

\bibitem{B3} 
H.\ Brenner, 
\textit{Tight closure and vector bundles}, Three Lectures on Commutative Algebra, Univ. Lecture Ser., {\bf 42}, Amer. Math. Soc., 2008.

\bibitem{BK} 
H.\ Brenner, A.\ Kaid,
\textit{A note on the weak Lefschetz property of monomial complete intersections in positive characteristic}, Collect. Math., {\bf 62} (2011), 85--93.

\bibitem{BMS}
M. Blickle, M. Musta{\c t}{\u a}, and K. E. Smith,
\textit{Discreteness and rationality of diagonal $F$-thresholds}, Michigan Math. J. {\bf 57} (2008), 43--61.

\bibitem{D} 
S. P. Dutta, 
\textit{Intersection Multiplicity of Modules in the Positive Characteristics}, 
J.\ Algebra {\bf 280} (2004), 394--411.

\bibitem{HM} 
C.\ Han, P.\ Monsky, 
\textit{Some surprising Hilbert-Kunz functions},
Math.\ Z. {\bf 214} (1993), 119--135.

\bibitem{HMTW}
C. Huneke, M. Musta{\c t}{\u a}, S. Takagi, and K.-i. Watanabe, 
\textit{$F$-thresholds, tight closure, integral closure, and multiplicity bounds}, Michigan Math. J. {\bf 57} (2008), 463--483.

\bibitem{HTW}
C. Huneke, S. Takagi, and K.-i. Watanabe, 
\textit{Multiplicity bounds in graded rings}, Kyoto J. Math. {\bf 51} (2011), no. 1, 127--147.

\bibitem{La} 
A.\ Langer, 
\textit{Semistable sheaves in positive characteristic},
Ann.\ Math.\  {\bf 159} (2004), 251--276.

\bibitem{L} 
J.\ Li, 
\textit{The upper bound of Frobenius related length functions}, 
J.\ Algebra {\bf 285} (2005), 856--867.

\bibitem{K}
A.\ Kaid,
\textit{On semistable and strongly semistable syzygy bundles}, Ph.D. Thesis (2009), University of Sheffield.
\bibitem{KV} 
A.\ Kustin, A.\ Vraciu,  
\textit{Socle degrees of Frobenius powers},
Illinois J.\ Math. {\bf 51} (2007), 185--208.

\bibitem{KRV} 
A.\ Kustin, H.\ Rahmati, A.\ Vraciu,  
\textit{The resolution of the bracket powers of the maximal ideal in a diagonal hypersurface ring}, preprint
arXiv:1012.1026.


\bibitem{M1} 
P.\ Monsky, 
\textit{The Hilbert-Kunz function},
Math.\ Ann. {\bf 263} (1983), 43--49.

\bibitem{M2} 
P.\ Monsky, 
\textit{The Hilbert-Kunz multiplicity of an irreducible trinomial},
J.\ Algebra {\bf 304} (2006), 1101--1107.

\bibitem{M3} 
P.\ Monsky, 
\textit{MasonÕs theorem and syzygy gaps},
J.\ Algebra {\bf 303} (2006), 373--381.

\bibitem{MTW}
M. Musta{\c t}{\u a}, S. Takagi, and K.-i. Watanabe,
\textit{F-thresholds and BernsteinÐSato polynomials}, Proceedings of the fourth European congress of mathematics, pp. 341--364, European Mathematical Society, {\" Z}urich, 2005.

\bibitem{PS} C.\ Peskine and L.\ Szpiro, \textit{Dimension
projective finie et cohomologic locale}, Inst.\ Hautes Etudes Sci.
Publ.\ Math.\ \textbf{42} (1973), 323-395.

\bibitem{Y} 
C.A. Yackel, 
\textit{Bounds on annihilator lengths in families of quotients of Noetherian rings}, 
J.\ Algebra {\bf 228} (2000), 763--792.




\end{thebibliography}
\end{document}